\pdfoutput=1
\documentclass{amsart}

\usepackage[margin=16mm]{geometry}

\setlength{\hsize}{340pt}
\setlength{\vsize}{490pt}
\setlength{\baselineskip}{20pt}

\usepackage{epsfig}
\usepackage{amssymb,amsmath,amsthm,ytableau}
\usepackage{amscd,pb-diagram} 
\usepackage[all]{xypic}
\usepackage[numbers]{natbib}
\usepackage[colorlinks]{hyperref}
\usepackage{MnSymbol}


\newtheorem{theorem}{Theorem}[section]

\newtheorem{lemma}{Lemma}[section]

\newtheorem{proposition}{Proposition}[section]

\theoremstyle{definition}
\newtheorem{definition}{Definition}[section]
\newtheorem{remark}{Remark}[section]

%
%
%

%
%
%

\newcommand{\Z}{\mathbb{Z}}
\newcommand{\R}{\mathbb{R}}


%
%
%
%

\newcommand{\X}{\mathfrak X}

%
%
%
%
\newcommand{\Span}[1]{\left\langle#1\right\rangle}


\DeclareMathOperator{\SO}{\mathsf{SO}}
\DeclareMathOperator{\OOO}{\mathsf{O}}
\newcommand{\CO}{\widetilde{\mathsf{CO}}}

\DeclareMathOperator{\GL}{\mathsf{GL}}

\DeclareMathOperator{\Aff}{Aff}

\DeclareMathOperator{\Hess}{Hess}

%

%
%
%
\newcommand{\A}{\mathcal{A}}

\newcommand{\E}{\mathcal{E}}

\newcommand{\CC}{\mathcal{C}}

\newcommand{\TT}{\mathcal{T}}

\newcommand{\NN}{\mathcal{N}}

%
%
%
%

\newcommand{\Th}{^\textrm{th}}

\newcommand{\Rd}{^\textrm{rd}}
\newcommand{\Nd}{^\textrm{nd}}




\begin{document}

\title[Affine--invariant  PDEs and Fubini--Pick]{
Third--order affine--invariant (systems of) PDEs in two independent variables as vanishing of the Fubini--Pick invariant}

 \author{Dmitri~Alekseevsky}
   \address{Institute for Information Transmission Problems, B. Karetny
per. 19, 127051, Moscow (Russia) and University of Hradec Kralove, Rokitanskeho 62,
Hradec Kralove 50003 (Czech Republic).}
\email{dalekseevsky@iitp.ru}

 \author{Gianni Manno}
   \address{Dipartimento di Matematica ``G. L. Lagrange'', Politecnico di Torino, Corso Duca degli Abruzzi, 24, 10129 Torino, Italy.}
    \email{giovanni.manno@polito.it}
 \author{Giovanni Moreno}
 \address{Department of Mathematical Methods in Physics,
 Faculty of Physics, University of Warsaw,
ul. Pasteura 5, 02-093 Warszawa, Poland}
 \email{giovanni.moreno@fuw.edu.pl}
 


\maketitle

\begin{abstract}
In this paper we study $3\Rd$ order (system of) PDEs in two independent variables $x,y$ and one unknown function $u$  that are invariant with respect to the group of affine transformation $\Aff(3)$ of $\R^3=\{(x,y,u)\}$. After proving their relationship with the Fubini--Pick invariant, we derive the aforementioned PDEs by using a general method introduced in \cite{alekseevsky2020general}, which sheds light on some of their geometrical properties.
\end{abstract}

\setcounter{tocdepth}{1}

 \textbf{Keywords:} Lie symmetries of PDEs; G-invariant PDEs; Jet Spaces; Fubini-Pick invariant

\par
\textbf{MSC 2020:} 35A30; 35B06; 58A20; 58J70

\section*{Introduction}

It is well known that the only $\Aff(3)$--invariant second--order scalar PDE is the one given by $\det(u_{ij})=0$: a proof can be found, e.g.,  in~\cite{MR2324300} (see also the references therein, as well as~\cite{Ushakov2000TheEG}   for a discussion of the general solution);  remarkably, the   contact geometry   of this equation can  be described in terms of its characteristics, see~\cite{MR2985508,MR2383541,MR2503974}.  
In this paper we show how to construct (non--trivial systems of) $\Aff(3)$--invariant \emph{third--order} PDEs in one unknown function and two independent variables by using a general method developed in \cite{alekseevsky2020general,AMMv2_2022}. The geometric character of the proposed construction will be duly emphasised: first,  by highlighting  the relation of the so--obtained PDEs with the Fubini--Pick invariant, then by studying  the compatibility conditions (if a   \emph{system} of $\Aff(3)$--invariant  of third--order PDEs is obtained) or  the geometry of  characteristics (if a   \emph{scalar} $\Aff(3)$--invariant  third--order PDE is obtained) and,  finally, by clarifying the overall role played by the Blaschke metric.\par 

A \emph{statistical manifold} is a (pseudo)--Riemannian manifold $(M,h)$ equipped with a connection $\nabla$, such that $\nabla h$ is symmetric, see~\cite{Kurose1994ONTD}:  it is worth mentioning that the Blaschke metric, the Blaschke connection, and the   Fubini--Pick $(0,3)$--tensor, all satisfy the axioms of a statistical manifold. Therefore, our geometric departing point can be  framed in this larger context.\par 
The paper is organized  as follows.\par
In Section~\ref{secAffDiffGeo} we focus on the basics of  the affine geometry of surfaces of $\R^3$ and the affine structure of jet spaces. Contextually,  we give the definition  of a $G$--invariant (systems of) PDEs.\par
In Section~\ref{secFidHyp}, after recalling the main results of \cite{alekseevsky2020general}, we explain  how to construct $G$--invariant (systems of) PDEs by using the aforementioned  method.\par
In Section~\ref{sec:construction.aff.inv.pdes} we obtain  the same  $\Aff(3)$--invariant (system of) PDE from two different viewpoints. The first one  is based on the method used in~\cite{MR2406036}: $G$--invariant PDEs can be obtained as singular orbits of the prolonged action of $G$ on suitable jet spaces (Section~\ref{sec:oliveri}). The second one shows that the same (system of) PDEs can be obtained by employing the affine second fundamental form $h^\xi$ associated to a transversal vector field $\xi$ and the corresponding affine $(0,3)$--tensor $C^\xi$, rather than by using the Blaschke metric (Section~\ref{secTensDer}).\par
In Section~\ref{secCasAff} we use the method outlined in Section \ref{secFidHyp} to construct $\Aff(3)$--invariant (systems of) PDEs. The idea is to (equivariantly) extend certain submanifolds defined in a fibre of the jet space of order $3$ to the whole jet space by means of the affine action of $\Aff(3)$, showing that only two cases are possible, accordingly to the signature of the Hessian matrix. Indeed, if the Hessian is non-degenerate (this is always the case if we assume that the affine second fundamental form is non--degenerate), we get an $\Aff(3)$--invariant system of two PDEs of third order; on the other hand, if the determinant of the Hessian is negative, we obtain also a pair of $\Aff(3)$--invariant scalar PDEs of third order: if these two scalar PDEs are considered as a system, the result is the same  $\Aff(3)$--invariant system of PDEs mentioned before.\par
In Section~\ref{sec:det.hess.maggiore.zero} we focus on the $\Aff(3)$--invariant system of PDEs by computing its compatibility conditions. In particular, we show that a solution $u=u(x,y)$ to this system described a  conic  in both the planes $(x,u)$ and  $(y,u)$.\par
In Section~\ref{sec:det.hess.minore.zero} we focus on the $\Aff(3)$--invariant scalar PDEs: we show that their characteristic distribution degenerates into a $3$--dimensional vector sub-distribution of the $5$--dimensional Cartan distribution (i.e., the contact distribution of order $2$) on $J^2$. We finally prove  that such a $3$--dimensional distribution completely characterizes the $\Aff(3)$--invariant scalar PDEs, in the sense that their solutions are Legendrian submanifolds of $J^1$ whose prolongation on $J^2$ non--trivially interesect the aforementioned distribution.

\subsection*{Notations and conventions}

We denote by $\odot$ the symmetric product between tensors.
%
In the case of differential forms, we will often omit the symbol $\odot$, for instance:
\begin{equation}\label{eqn:dxidxj.sym}
dx^idx^j:=dx^i\odot dx^j=\frac12(dx^i\otimes dx^j + dx^j\otimes dx^i)\,.
\end{equation}
We denote by $S^kV$ the $k\Th$ \emph{symmetric tensor power} of $V$.
For a subset $U\subseteq V$ of the vector space $V$, symbol $\Span{U}$ denotes the linear span of $U$ in $V$;   we simply write $\R v$ for the linear span of $U=\{v\}$. If a group $G$ acts on a set $S$, then  we let $G\cdot A:=\{g\cdot a\mid g\in G\, ,a\in A\}$  for any $A\subseteq S$.
The module of vector fields on a manifold $M$ is denoted by $\X(M)$.
A system of coordinates on $\R^3$ will be denoted by $(x,y,u)$,  unless otherwise specified.
The Einstein convention on repeated indices will be used, unless otherwise specified.

 \subsection*{Acknowledgments}
D.~Alekseevsky gratefully acknowledges support by the Grant Basis-Foundation Leader n.  22-7-1-34-1. G.~Manno gratefully acknowledges support by the project ``Connessioni
proiettive, equazioni di Monge-Amp\`ere e sistemi integrabili'' (INdAM),
``MIUR grant Dipartimenti di Eccellenza 2018-2022 \linebreak (\texttt{E11G18000350001})'', ``Finanziamento alla Ricerca'' \texttt{53\_RBA17MANGIO} and \texttt{53\_RBA21MANGIO},  and
PRIN project 2017 ``Real and Complex Manifolds: Topology, Geometry and
holomorphic dynamics'' (code \texttt{2017JZ2SW5}).  G.~Manno is a member of\linebreak GNSAGA
of INdAM.  G.~Moreno is supported by   the Polish National Science Center  project “Complex contact
manifolds and geometry of secants”, \texttt{2017/26/E/ST1/00231}.

\section{Preliminaries}\label{secAffDiffGeo}

\subsection{Basics on affine geometry of surfaces \cite{An-Min:2015aa,nomizu1994affine}}\label{subAffCaseR3}


Let $S$ be a surface of $\mathbb{R}^3$ and $\xi$ be a transversal field to it. We denote by $h^\xi$ the \emph{affine second fundamental form} associated to $\xi$, i.e., the bilinear form defined by
\begin{equation}\label{eqn:affine.II.form}
D_XY=\nabla^\xi_XY + h^\xi(X,Y)\xi\, ,\quad X,Y\in\X(S)\, ,
\end{equation}
where $D$ is the Levi-Civita connection of $\mathbb{R}^3$ and $\nabla^\xi_XY$ is the tangential component.
The rank of $h^\xi$ is independent of the choice of the transversal field $\xi$, so that
a surface $S\subset\R^3$ is called \emph{non--degenerate} if the rank of $h^\xi$ is equal to $2$. From now on, unless otherwise specified, we shall work only with non--degenerate surfaces.
Let $\omega\in\Omega^3(\mathbb{R}^3)$ be a $D$--parallel volume element: the \emph{induced volume form} on $S$, associated with  $\xi$, is the differential 2--form $\omega^\xi$ defined by
\begin{equation*}
\omega^\xi(X,Y):={\omega}(X,Y,\xi)\,,\quad X,Y\in\X(S)\, ,
\end{equation*}
and it is easy to see that
\begin{equation*}
\nabla^\xi(\omega^\xi)=\tau^\xi\otimes\omega^\xi
\end{equation*}
for some $1$--differential form $\tau^\xi$.
The Codazzi equation for $h^\xi$,
$$
(\nabla^\xi_X h^\xi)(Y,Z)+\tau^\xi(X)h^\xi(Y,Z)=(\nabla^\xi_Y h^\xi)(X,Z)+\tau^\xi(Y)h^\xi(X,Z)\,,\quad X,Y,Z\in\X(S)\,,
$$
implies that the following $(0,3)$--tensor on $S$, that we call the \emph{affine $(0,3)$--tensor associated with  $\xi$}, is symmetric:
\begin{equation}\label{eqDefPick}
    C^\xi(X,Y,Z):=(\nabla^\xi_X h^\xi)(Y,Z)+\tau^\xi(X)h^\xi(Y,Z)\,,\quad X,Y,Z\in\X(S)\,.
\end{equation}
Recalling that we are assuming the surface $S$ to be non--degenerate, there exists a unique, up to sign, transversal field $\A$, called the \emph{affine normal}, such that $\tau^\A=0$ and  $\omega^\A$ coincides with the volume element associated to $h^\A$.
The corresponding connection $\nabla:=\nabla^\A$, the  $(0,2)$--tensor $h:=h^\A$ and the $(0,3)$--tensor $C:=C^\A$,
\begin{equation}\label{eqDefPick.vero}
    C(X,Y,Z)=(\nabla_X h)(Y,Z)\,,\quad X,Y,Z\in\X(S)\,,
\end{equation}
are called, respectively, the \emph{Blaschke connection}, the \emph{Blaschke metric} and the \emph{Fubini--Pick cubic form}.
The contraction of the Fubini-Pick form $C$ with the Blaschke metric, i.e.,
\begin{equation}\label{eqPickContr}
C^{ijk} C_{ijk} =    h^{i_1i_2}h^{j_1j_2}h^{k_1k_2}C_{i_1 j_1 k_1}C_{i_2 j_2 k_2}\, ,
\end{equation}
is  a function called the \emph{Fubini-Pick invariant}. Hence,  the  surface  $S$  with the (pseudo)--Riemannian metric $h$ and  the  connection $\nabla$ is  a  statistical manifold as the Fubini--Pick cubic form $C=\nabla h$ is symmetric. In the context of statistical manifolds such tensor is called also the \emph{Amari--Chentsov tensor}.




\subsection{Jet spaces and PDEs \cite{KrasilshchikLychaginVinogradov1986,MR989588,Vinogradov1988a}}\label{sec:jets}

From now on, $M$ will be a smooth manifold of dimension $3$
and  $S \subset M$ an embedded  surface of $M$, unless specified otherwise. Locally, in an appropriate system of local coordinates $(x^1, x^2,u)$ of a neighborhood of $p\in S$, the surface $S$ is represented as $S=S_f=\{u=f(x^1,x^2)\}$ where $f$ is a smooth function of the variables $(x^1,x^2)$.

\smallskip
We denote by $J^\ell$ the \emph{space of $\ell$--jets of surfaces} of $M$, i.e.,
%
\begin{equation*}
J^\ell:=\bigcup_{p\in M}\left\{[S]^\ell_{p}\mid \text{$S$ is a surface of $M$ passing through $p$}\right\}\, ,
\end{equation*}
where $[S]^\ell_{p}$ denotes the equivalence class of surfaces having with $S$ a contact of order $\ell$ at $p$.\par

The natural map
$
j^\ell:p\in S \mapsto [S]^\ell_{p}\in J^\ell
$
is called the \emph{$\ell\Th$--jet extension}  of $S$
and symbol
\begin{equation}\label{eq:jet.ext}
S^{(\ell)}:=j^\ell(S)
\end{equation}
denotes its image.

\smallskip
The  coordinates
 $(x^1,x^2,u)$   defines local coordinates
$$(x^1,x^2, u, u_i, u_{ij}, \cdots,  u_{i_1 i_2 \cdots i_{\ell}})$$
of $J^{\ell}$, such that the prolongation  $S_f^{(\ell)}$ of the  surface  $S_f$ is locally described by
$$
\left(x^1,x^2,f(x^1,x^2),\frac{\partial f}{\partial x^i},\frac{\partial^2 f}{\partial x^i x^j},\dots,\frac{\partial^\ell f}{\partial x^{i_1} x^{i_2}\cdots x^{i_\ell}}\right)\, ,
$$
which shows  that $S_f^{(\ell)}$ is a $2$--dimensional submanifold of $J^\ell$.
%
%

The natural projections
\begin{equation}\label{eqEqPiKaKaMenoUno}
\pi_{\ell,m}:J^{\ell}\stackrel{}{\longrightarrow} J^{m}\,,\quad [S]^\ell_{p}\longmapsto [S]^m_{p}\, ,\quad \ell>m\, ,
\end{equation}
define a tower of bundles. 
%
In what follows,  a point $[S]^\ell_{p}\in J^\ell$ will be often denoted simply by $a^\ell$: 
accordingly,
\begin{equation}\label{eqn:fibre.of.J}
J^\ell_{a^m}:=\pi_{\ell,m}^{-1}(a^m)\,,\quad a^m\in J^m\,,
\end{equation}
denotes the fiber of $\pi_{\ell,m}$ over $a^m$. The following definition will come in handy.
\begin{definition}\label{defTautBundle}
For $\ell\geq 1$, the \emph{tautological} vector bundle $\TT^\ell \subset \pi_{\ell,\ell-1}^* (TJ^{\ell-1})$   is the bundle over $J^{\ell}$ given by
\begin{equation*}
\TT^\ell:=\left\{ (a^\ell,v)\in J^\ell\times TJ^{\ell-1}\,\,|\,\,v\in  T_{a^{\ell-1}}S^{(\ell-1)}\right\}\,,
\end{equation*}
i.e., the fiber $\TT^\ell_{a^\ell}$ over the point $a^\ell=[S]^{\ell}_{a^0}$ is
$
\TT^\ell_{a^\ell}=T_{a^{\ell-1}}S^{(\ell-1)}\,.
$
\end{definition}

It is well known that $\pi_{\ell,\ell-1}:J^{\ell}\longrightarrow J^{\ell-1}$, for $\ell\geq 2$, are affine bundles modeled on the   vector bundle
\begin{equation}\label{eqn:vertical.tang.bundle}
\ker\left(d\pi_{\ell,\ell-1}\right)\,
\end{equation}
 on $J^\ell$, 
where $d\pi_{\ell,\ell-1}:TJ^\ell \longrightarrow TJ^{\ell-1}$: the  \emph{contact distribution of order $\ell$} on $J^\ell$  can be then defined as:
\begin{equation}\label{eqn:higher.contact}
\CC^{\ell}:=(d\pi_{\ell,\ell-1})^{-1}\left(\TT^{\ell}\right)\, ,
\end{equation}
%
see~\cite{MR861121,MR2352610,MR722524}; the next result is also classical, see~\cite{MR861121,MR989588}.
\begin{lemma}\label{lemma.ker.vector}
There is an identification 
\begin{equation*}
\ker\left(d\pi_{\ell,\ell-1}\right)\simeq \pi_{\ell,1}^* (S^\ell\TT^* \otimes \NN)\,,
\end{equation*}
where $\TT:=\TT^1$ and $\NN$ 
is the vector bundle on $J^1$ whose fibres are defined by $\NN_{[S]^1_p}:=\frac{T_{p}M}{T_{p}S}$.
\end{lemma}

\begin{remark}\label{rem.Taylor}This  affine bundle structure    will be playing a key role in Section~\ref{secFidHyp} below, since  it allows to regard the difference
between two points $[S]^{\ell}_p$ and $[\widetilde{S}]^{\ell}_p$ of the fibre $J^\ell_{a^{\ell-1}}$ as an element of $S^\ell T_{p}^*S\otimes \frac{T_{p}M}{T_{p}S}$:   
for instance, with $\ell=2$, Lemma~\ref{lemma.ker.vector} locally reads
\begin{equation}\label{eqn:S2p.meno}
    [S]^2_p - [\widetilde{S}]^2_p=(x^i,u,u_i,u_{ij})-(x^i,u,u_i,\widetilde{u}_{ij})\simeq (u_{ij}-\widetilde{u}_{ij})\partial_{u_{ij}}\simeq  \frac{2}{1+\delta_{ij}} (u_{ij}-\widetilde{u}_{ij}) dx^i dx^j   \otimes\partial_u \,.
\end{equation}
\end{remark}
A \emph{system of $m$ PDEs of order $\ell$ in $2$ independent variables and one unknown (or dependent) variable} on a manifold $M=J^0$ is a $m$--codimensional sub--bundle $\E$ of $J^\ell$. Taking into account \eqref{eqn:fibre.of.J}, we define the fiber of $\E\subset J^\ell$ over $a^m\in J^m$, $m<\ell$:
\begin{equation}\label{eqn:fibre.of.E}
\E_{a^m}:=\E\cap J^\ell_{a^m}\,.
\end{equation}
A \emph{solution} to $\E$  is  a surface $S$ of $M$ such that its $\ell\Th$ jet extension $S^{(\ell)}$ lies in $\E$, cf.   \eqref{eq:jet.ext}.

\smallskip
A local diffeomorphism $\phi$ of $M$
naturally acts on all the jet spaces via its prolongation
\begin{equation}\label{eqn:prol.jet.spaces}
\left[S\right]_{p}^\ell\in  J^\ell  \longrightarrow [\phi(S)]_{\phi(p)}^\ell\in J^\ell\,
\end{equation}
to $J^\ell$.  
This allows to speak of  \emph{$G$-invariance} for a system of PDEs $\E\subset J^\ell$, where $G$ is a connected Lie group acting on $M$: the system $\E$ is   \emph{$G$-invariant}  if the subset $\E$ is preserved by $G$. Let $X$ be an  infinitesimal generator of $G$: its prolongation $X^{(\ell)}$ to $J^\ell$ is obtained by prolonging its local flow; therefore, if $\E$ is   {$G$-invariant}, then  $X^{(\ell)}$  is tangent to $\E$. It is worth recalling the prolongation formula for $X=X_0\partial_u + X^i\partial_{x^i}$:
\begin{equation}\label{eqn:X.r}
X^{(\ell)}=X_0\partial_u + X^i\partial_{x^i}  +  \sum_{|\sigma|=1}^\ell X_{\sigma}\partial_{u_{\sigma}}\,,
\end{equation}
where
$\sigma$ is a multi--index of length $|\sigma|$ defined by
$\sigma=(\sigma_1,\dots,\sigma_s)$, $s\leq \ell$, $\sigma_i\in\{1,\dots,n\}$, $\sigma_1\leq\sigma_2\leq\cdots\leq\sigma_s$,  and the component $X_{\sigma}$ are recursively defined by
$$
X_{\sigma,j}:=D_{x^j}(X_\sigma) - u_{\sigma,i}D_{x^j}(X^i)\,,
$$
where $D_{x^j}$ is the  operator  of total derivation with respect to $x^j$.



\section{A general method to construct invariant PDEs
on homogeneous manifolds}\label{secFidHyp}

We review now one of the main result of \cite{alekseevsky2020general}. 
Let us assume that $J^0=M$ is a homogeneous manifold, i.e., $J^0=G/H$ for a  connected Lie group $G$, and  choose a point $o\in J^0$,
as well as $o^{\ell}\in J^\ell$, projecting onto $o$: we can then  consider, $\forall\,\ell\geq 2$, the fibre $J^{\ell}_{o^{\ell-1}}$ as a vector  space with origin $o^{\ell}$. The natural lift of an element $g\in G$ on each  $\ell$--jet space $J^{\ell}$ is given by~\eqref{eqn:prol.jet.spaces}, which  in this case reads
$$
   g :   o^{\ell}= [S]^{\ell}_{o}\in J^{\ell}   \longrightarrow g\cdot o^\ell:=[g\cdot S]^{\ell}_{g(o)} \in J^{\ell}\,.
$$
We will denote by $H^{(\ell)}$ the stabilizer of $o^{\ell}$ in $G$, that is
$$
H^{(\ell)}:=G_{o^{\ell}}\, ,
$$
and by
\begin{equation*}
\tau :  H^{(k-1)} \longrightarrow \Aff(J^k_{o^{k-1}})
\end{equation*}
 the affine  action  of $H^{(k-1)}$   on  the  fibre  $J^k_{o^{k-1}}$. Then
 \begin{equation}\label{eqn:Wk.orbit}
W^k:=\tau(H^{(k-1)})\cdot o^{k}\subset J^k_{o^{k-1}}\,
\end{equation}
defines an affine subspace.\par 
%

\medskip\noindent
If we assume that there exists a point $o^k\in J^k$, with $k\geq 2$, such that\par \medskip
\begin{itemize}
  \item[\textbf{(A1)}] the   orbit $\check{J}^{k-1} := G\cdot o^{k-1}  \subset  J^{k-1}$ is open,
  \end{itemize}
  \medskip
then
$$ \pi_{k,k-1}: \check{J}^k:= \pi_{k, k-1}^{-1}(\check{J}^{k-1}) \longrightarrow\check{J}^{k-1} = G/H^{(k-1)}$$
is a homogeneous affine fibre  bundle defined by the affine representation $\tau$ of the stability group $H^{(k-1)}$ in the fiber $J^k_{o^{k-1}}$.

\begin{proposition}[\cite{alekseevsky2020general}]
Let $M=G/H$ be a homogeneous manifold and let $o^k$ be a point    satisfying  {\normalfont (A1)}. 
Any $\tau(H^{k-1})$--invariant hypersurface $\Sigma \subset J^k_{o^{k-1}}$ extends to a $G$--invariant hypersurface $\mathcal{E} := G \cdot\Sigma\subset \check{J}^k$, which is a $G$--invariant PDE of order $k$.
\end{proposition}
The assumption $(A2)$ below reduces the classification of the aforementioned invariant PDEs to the description of hypersurfaces, that are invariant under a linear group. We denote by $L_{H^{(k-1)}} = H^{(k-1)}_{o^k}$ the   subgroup of the affine group $\tau (H^{(k-1)})$  stabilizing  $o^k$, which  can be considered as the linear part of the affine group $\tau (H^{(k-1)})$.
\par \medskip
\begin{itemize}
  \item[\textbf{(A2)}]
The affine group $\tau(H^{(k-1)})$ is the  semidirect product
$
\tau(H^{(k-1)}) = L_{H^{(k-1)}} \rtimes T_{W^k}
$,
\end{itemize}
where $T_{W^k}$ is the group of translations or, equivalently, the orbit $W^k$ defined by~\eqref{eqn:Wk.orbit} is a vector space.\par 
A homogeneous manifold $M=G/H$, such that both assumptions (A1) and (A2) are satisfied has been referred to as a \emph{$k$--admissible manifold} in~\cite{AMMv2_2022}: to simplify the exposition of the present paper, we will use the stronger assumption of the existence of a \emph{fiducial surface} in $M$. 
%
\begin{definition}\label{defMAIN}
A  surface $S \subset  M$ is  called  a \emph{fiducial surface} (of order $k$ for the group $G$ at the origin $o$) if
\begin{itemize}
\item $o\in S$,
    \item $S$ is homogeneous  with respect to a  subgroup  of $G$,
    \item $o^{k-1}=[S]^{k-1}_o$   satisfies (A1),
    \item $o^{k}=[S]^{k}_o$ satisfies (A2), that is the affine subspace $W^k=\tau(H^{(k-1)})\cdot o^k$ given by~\eqref{eqn:Wk.orbit} is a linear subspace.
\end{itemize}
\end{definition}
We can now state one of the main result of \cite{alekseevsky2020general}, duly recast in the context of systems of PDEs. Recall that   $L_{H^{(k-1)}}$ denotes  the linear part of $\tau(H^{(k-1)})$.
\begin{theorem}\label{thMAIN1}
There is  a natural  $1$--$1$   correspondence  between   $L_{H^{(k-1)}}$--invariant $m$-codimensional submanifolds $\overline{\Sigma}$ of $J^k_{o^{k-1}}/W^k$
and  $G$--invariant systems of $m$ PDEs  $\E_{\Sigma}:=G\cdot\Sigma$  of  $J^k$, where $\Sigma\subset J^k_{o^{k-1}}$ is the pre-image of $\overline{\Sigma}$ (see  Figure~\ref{cap.1}).
\end{theorem}
\begin{figure}[h]
\centering
\includegraphics[scale=0.65]{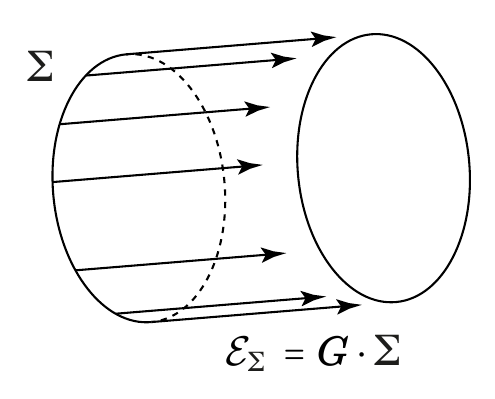}
\caption{Let $m=1$, then a $G$-invariant PDE $\E_{\Sigma}$ is obtained  as the   $G$--extension $\E_{\Sigma}=G\cdot\Sigma$ of the cylinder $\Sigma$ over a $L_{H^{(k-1)}}$--invariant hypersurface $\overline{\Sigma} \subset p(J^k_{o^{k-1}})$, i.e., of the pre--image   $\Sigma=p^{-1}(\overline{\Sigma})$ of  $\overline{\Sigma} $ in the vector space $p(J^k_{o^{k-1}}) = J^k_{o^{k-1}}/W^k$, where $p$ is the canonical projection.}\label{cap.1}
\end{figure}
%

%




\section{Affine invariant third--order PDEs}\label{sec:construction.aff.inv.pdes}
Let $(x,y,u)$ be local coordinates on $\R^3$. Then, according to Section~\ref{sec:jets}, they define local coordinates on any $k$-jet space:
\begin{equation}\label{eqDefJk}
J^k:=\{(x,y,u,u_x,u_y,u_{xx},u_{xy},u_{yy},u_{xx},u_{xxx},u_{xxy},u_{xyy},u_{yyy},u_{xxxx},\dots)\}\,.
\end{equation}

\subsection{Lie symmetry method for obtaining  the $\Aff(3)$--invariant PDE}\label{sec:oliveri}
%
%
%
%
%
%
%
%
The Lie algebra $\mathfrak{aff}(3)$ of the vector fields on $J^3$ associated with the action of the affine group $\Aff(3)$  is spanned by the prolongations $X_i^{(3)}$ to $J^3$ of $X_i$, where $X_i$ is one of the twelve generators
\begin{equation}\label{eqn:aff.vectors}
\begin{array}{llllllllllll}
\partial_x\,, & \partial_y\,, & \partial_u\,, & x\partial_x\,, & x\partial_y\,, & x\partial_u\,, &
y\partial_x\,, & y\partial_y\,, & y\partial_u\,, & u\partial_x\,, & u\partial_y\,, & u\partial_u\, ,
\end{array}
\end{equation}
with $i=1, \dots, 12$, cf.  \eqref{eqn:X.r}. The 
assignment  
\begin{eqnarray*}
     J^3\ni a&\longmapsto &
\mathcal{D}_a:=\textrm{Span}\langle X_1^{(3)}|_a\,,\,X_2^{(3)}|_a\,,\ldots\,, X_{12}^{(3)}|_a\rangle \subset T_aJ^3
\end{eqnarray*}
defines a distribution on $J^3$, which is of dimension $12=\dim J^3$ everywhere, except for a closed subset $\E\subset J^3$. Since $\E$ is $\Aff(3)$--invariant, it leads to a  $\Aff(3)$--invariant  (system of) PDEs, 
%
%
as clarified in  the next Proposition (see~\cite{MR2406036} for more details). 


\begin{proposition}\label{propPrimaFormInvarianza}
The subset $\E\subset J^3$ described by
\begin{equation}\label{eqPDE_completa}
\E:\,\,(u_{xx}u_{yy}-u_{xy}^2)\cdot F=0\,,
\end{equation}
where $F$ is
\begin{align}\label{eqFormulaSospirata}
F&=6 u_{xx} u_{xxx} u_{xy} u_{yy}
   u_{yyy}-6 u_{xx} u_{xxx} u_{xyy}
   u_{yy}^2-18 u_{xx} u_{xxy} u_{xy}
   u_{xyy} u_{yy}+12 u_{xx} u_{xxy}
   u_{xy}^2 u_{yyy} \\
  &-6 u_{xx}^2 u_{xxy}
   u_{yy} u_{yyy}
   +9 u_{xx} u_{xxy}^2
   u_{yy}^2-6 u_{xx}^2 u_{xy} u_{xyy}
   u_{yyy}+9 u_{xx}^2 u_{xyy}^2
   u_{yy}+u_{xx}^3 u_{yyy}^2-6 u_{xxx}
   u_{xxy} u_{xy} u_{yy}^2\nonumber\\
 &+12 u_{xxx}
   u_{xy}^2 u_{xyy} u_{yy}-8 u_{xxx}
   u_{xy}^3 u_{yyy}+u_{xxx}^2 u_{yy}^3\,, \nonumber
\end{align}
is  $\Aff(3)$--invariant.
\end{proposition}
As we will see later, the aforementioned subset $\E$ encompasses  all possible $\Aff(3)$--invariant  (systems of) PDEs. 
\begin{remark}
A solution of $u_{xx}u_{yy}-u_{xy}^2=0$ is also a solution of $F=0$,  where $F$ is given by \eqref{eqFormulaSospirata}. This can be seen by considering the differential consequences of $u_{xx}u_{yy}-u_{xy}^2=0$, i.e., $ u_{xxx}u_{yy}+u_{xx}u_{xyy}-2u_{xy}u_{xxy}=0$ and $u_{xxy}u_{yy}+u_{xx}u_{yyy}-2u_{xy}u_{xyy}=0$.
%
%
%
%
%
\end{remark}

\begin{proposition}\label{prop:Fubini_uguale_zero}
Let $u_{xx}u_{yy}-u_{xy}^2\neq 0$ and let $F$ be given by \eqref{eqFormulaSospirata}. Then $F=0$ if and only if the Fubini--Pick invariant \eqref{eqDefPick.vero} vanishes.
\end{proposition}
\begin{proof}
It is a straightforward computation.
\end{proof}

%
%
%

\subsection{Codimension and smoothness of (systems of) $\Aff(3)$--invariant PDEs}
%

An immediate consequence of \eqref{eqPDE_completa}  is that $\E$
is   the union of the Monge--Amp\`ere equation
$
u_{xx}u_{yy}-u_{xy}^2=0
$
and of the third--order PDE described by $F=0$, where $F$ is given by \eqref{eqFormulaSospirata}.
It is now   convenient to define
\begin{equation}\label{eqDefJ3check}
{J}^k_+:=\{ \text{points of \eqref{eqDefJk} such that } \det(\Hess(u))> 0\}\, ,\quad {J}^k_-:=\{ \text{points of \eqref{eqDefJk} such that } \det(\Hess(u))< 0\}\, ,
\end{equation}
with $k\geq 2$: indeed, in order to study the $\Aff(3)$--invariant PDE $\E$ given by~\eqref{eqPDE_completa}, we can focus on
%
 \begin{equation}\label{eqn:E.meno}
\E_+:=\E\cap {J}^3_+\,,\quad \E_-:=\E\cap {J}^3_-\,.
\end{equation}

\begin{proposition}\label{PropPrimaProposizioneSpezzamento}
$\E_+$ is a smooth submanifold of $J^3$ of codimension 2, whereas $\E_-$
is the union
\begin{equation}\label{eqn:E1.unito.E2}
    \E_-=\E_-^1\cup\E_-^2\,
\end{equation}
of two smooth hypersurfaces   of $J^3$.
\end{proposition}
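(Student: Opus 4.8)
The plan is to restrict everything to the open set $\check{J}^3=J^3_+\cup J^3_-$, where the second--order factor $u_{xx}u_{yy}-u_{xy}^2=\det\Hess(u)$ of \eqref{eqPDE_completa} is nowhere zero, so that there $\E$ coincides with $\{F=0\}$; and then to replace $F$ by the Fubini--Pick invariant \eqref{eqPickContr}, to which it is proportional by a nowhere--vanishing factor on $\check{J}^3$ (the remark at the end of Section~\ref{sec.local.expr}). Setting $\Phi:=h^{\A\,i_1i_2}h^{\A\,j_1j_2}h^{\A\,k_1k_2}C^\A_{i_1j_1k_1}C^\A_{i_2j_2k_2}$, one then has $\E_+=\{\Phi=0\}\cap J^3_+$ and $\E_-=\{\Phi=0\}\cap J^3_-$, and the whole statement will be extracted from two properties of the Fubini--Pick cubic form $C^\A$ over $\check{J}^3$: its apolarity with respect to the Blaschke metric, and its \emph{fibrewise linear} dependence on the third--order jet coordinates.

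First I would record the apolarity relation $h^{\A\,ij}C^\A_{ijk}=0$, which follows at once from \eqref{eqn:C.xi.coordinates}, \eqref{eqn:Blaschke.metric} and the logarithmic--derivative identity $\rho^{-1}\partial_k\rho=-\tfrac{1}{n+2}f^{ij}f_{ijk}$ coming from $\rho=[\det\Hess(f)]^{-1/(n+2)}$ (alternatively, see \cite{nomizu1994affine}). This means that over each $a^2\in\check{J}^2$ the value $C^\A(a^3)$ at $a^3\in\pi_{3,2}^{-1}(a^2)$ lies in the $2$--dimensional subspace $\W_{a^2}$ of symmetric cubic forms on $\R^2$ that are trace--free for $h^\A$ (the space of all cubic forms being $4$--dimensional and the $h^\A$--trace being onto $(\R^2)^*$). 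Next I would observe that, freezing all jet coordinates of order $\le 2$, formula \eqref{eqn:C.xi.coordinates} together with $\partial_k\rho=-\tfrac{\rho}{n+2}f^{ab}u_{abk}$ exhibits $a^3\mapsto C^\A_{ijk}(a^3)$ as \emph{linear} in $(u_{xxx},u_{xxy},u_{xyy},u_{yyy})$, equal in fact to $\pr_{h^\A}\!\bigl(\rho\,(u_{ijk})\bigr)$, where $\pr_{h^\A}$ is the projection onto $\W_{a^2}$ complementary to the pure--trace cubic forms. Hence the map $\check{J}^3\to\W$, $a^3\mapsto C^\A(a^3)$, into the rank--$2$ vector bundle $\W\to\check{J}^2$ with fibre $\W_{a^2}$, is a submersion --- already its restriction to each fibre of $\pi_{3,2}$ is onto $\W_{a^2}$ --- so in any local trivialization $C^\A\leftrightarrow(G_1,G_2)$ of $\W$ the functions $G_1,G_2$ have everywhere independent differentials along $\{C^\A=0\}$.

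Then I would split according to the sign of $\det\Hess(u)$. On $J^3_+$ the Blaschke metric $h^\A$ is positive definite, so $\Phi=\|C^\A\|^2_{h^\A}\ge 0$ and vanishes exactly where $C^\A=0$; therefore $\E_+=\{\Phi=0\}\cap J^3_+=\{C^\A=0\}\cap J^3_+=\{G_1=G_2=0\}$, a smooth submanifold of $J^3$ of codimension $2$ by the previous step. On $J^3_-$ the form $h^\A$ has Lorentzian signature; I would take its two null directions $\ell_1,\ell_2$ on $\R^2$, normalized by $h^\A(\ell_1,\ell_2)=1$, and set $G_i:=C^\A(\ell_i,\ell_i,\ell_i)$. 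Apolarity forces the remaining components $C^\A(\ell_1,\ell_1,\ell_2)=C^\A(\ell_1,\ell_2,\ell_2)=0$, so $(G_1,G_2)$ is exactly a trivialization of $\W$ in the null frame, and a short computation in that frame gives $\Phi=2\,G_1G_2$ up to a nowhere--zero factor. Thus $\E_-=\{\Phi=0\}\cap J^3_-=\bigl(\{G_1=0\}\cup\{G_2=0\}\bigr)\cap J^3_-$, and each $\E_-^i:=\{G_i=0\}\cap J^3_-$ is a smooth hypersurface of $J^3$ by the submersion property (with $\E_-^1$ and $\E_-^2$ meeting transversally along the codimension--$2$ set $\{C^\A=0\}\cap J^3_-$); this is \eqref{eqn:E1.unito.E2}. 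The two null line fields can be chosen globally on $\check{J}^2\cap\{\det\Hess<0\}$, since this set retracts onto a circle over which the monodromy interchanging the null directions is trivial, so $\E_-^1,\E_-^2$ are genuinely globally defined.

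The step I expect to be the main obstacle is the fibrewise linear--algebra claim that $C^\A$ depends linearly on the third--order jet coordinates with linear part the projection onto the $2$--dimensional space of trace--free cubic forms: this is what both upgrades the a priori codimension--$1$ locus $\{F=0\}$ to codimension $2$ on $J^3_+$ and supplies the regularity needed for smoothness in each signature. Once it is established, the distinction between a system of two PDEs and a union of two PDEs is merely the elementary dichotomy for a nondegenerate quadratic form in two variables --- positive definite when $h^\A$ is Riemannian, and a product of two independent linear forms when $h^\A$ is Lorentzian.
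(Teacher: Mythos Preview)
Your argument is correct and complete (with one cosmetic caveat: on $J^3_+$ the Blaschke metric is \emph{definite}, not necessarily positive definite, since the Hessian may be negative definite while $\rho>0$; but definiteness is all you use, so the conclusion $\Phi=0\Leftrightarrow C^\A=0$ stands). The apolarity check, the fibrewise linearity of $C^\A$ in the third--order coordinates with linear part $\rho\cdot\pr_{h^\A}$, the null--frame factorisation $\Phi=2G_1G_2$, and the monodromy argument for the global splitting of $\E_-$ are all sound.

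Your route, however, is genuinely different from the paper's. The paper does not invoke the Fubini--Pick form or apolarity at this stage; instead it exploits the transitivity of $\Aff(3)$ on each of $J^2_\pm$ to reduce the question to the two special fibres over $a^2_\pm=[Q_\pm]^2_{(0,0)}$, where a direct substitution into \eqref{eqFormulaSospirata} gives $F|_{J^3_{a^2_+}}=(3u_{xxy}-u_{yyy})^2+(3u_{xyy}-u_{xxx})^2$ and $F|_{J^3_{a^2_-}}$ a product of two linear forms; the fibrewise conclusion is then propagated by the group action and the sub--bundle structure of $\E_\pm\to J^2_\pm$. What your approach buys is an intrinsic, coordinate--free explanation of \emph{why} the dichotomy occurs---it is literally the signature of $h^\A$ acting on the $2$--dimensional apolar space of cubics---and it yields smoothness directly from the submersion property, without appealing to the $\Aff(3)$--action. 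What the paper's approach buys is that it is entirely elementary (no affine differential geometry beyond the raw polynomial $F$) and it rehearses exactly the mechanism---reduce to a fibre, then spread by $G$---that becomes the engine of Sections~\ref{secFidHyp}--\ref{secCasAff}. Your proof is closer in spirit to the tensorial reinterpretation given later in Theorem~\ref{th:1}.
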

\begin{proof}
It can be proved by direct computations: by assuming $u_{yy}>0$, the function $F$ defined by \eqref{eqFormulaSospirata} can be brought to the form
\begin{multline}
\left(\sqrt{u_{yy}^3}u_{xxx}+\frac{1}{\sqrt{u_{yy}^3}} (3u_{xx}u_{xy}u_{yy}u_{yyy}-3u_{xx}u_{xyy}u_{yy}^2-3u_{xxy}u_{xy}u_{yy}^2-4u_{xy}^3u_{yyy}+6u_{xy}^2u_{xyy}u_{yy}) \right)^2\\
+ \frac{1}{u_{yy}^3} (u_{xx}u_{yy}-u_{xy}^2) ( u_{xx}u_{yy}u_{yyy}-3u_{xxy}u_{yy}^2-4u_{xy}^2u_{yyy}+6u_{xy}u_{xyy}u_{yy}  )^2\, ,\label{eqSplittingBrutale}
\end{multline}
that   is the sum (resp., difference) of two squares in the case when $\det(\Hess(u))$ is positive (resp., negative), i.e.,
\begin{multline}\label{eqn:union.PDEs}
\E_-^1\cup\E_-^2:\,\,\bigg(\sqrt{-\det(\Hess(u))} \big(u_{xx}u_{yy}u_{yyy}-3u_{xxy}u_{yy}^2-4u_{xy}^2u_{yyy}+6u_{xy}u_{xyy}u_{yy} \big)
\\
+ \big(-3u_{xx}u_{xy}u_{yy}u_{yyy}+3u_{xx}u_{xyy}u_{yy}^2-u_{xxx}u_{yy}^3
+3u_{xxy}u_{xy}u_{yy}^2+4u_{xy}^3u_{yyy}-6u_{xy}^2u_{xyy}u_{yy}\big) \bigg)
\\
\cdot\bigg(\sqrt{-\det(\Hess(u))} \big(u_{xx}u_{yy}u_{yyy}-3u_{xxy}u_{yy}^2-4u_{xy}^2u_{yyy}+6u_{xy}u_{xyy}u_{yy} \big)
\\
- \big(-3u_{xx}u_{xy}u_{yy}u_{yyy}+3u_{xx}u_{xyy}u_{yy}^2-u_{xxx}u_{yy}^3
+3u_{xxy}u_{xy}u_{yy}^2+4u_{xy}^3u_{yyy}-6u_{xy}^2u_{xyy}u_{yy}\big) \bigg)=0\,.
\end{multline}
A similar reasoning works in the case  $u_{yy}<0$ and leads to the same PDEs.
\end{proof}

\begin{remark}\label{rem:Giovanni.1}

%
%
The intersections
$
\E_{a^2_\pm}=\E\cap J^3_{a^2_\pm}
$
 are cut out by the functions
$F|_{J^3_{a^2_\pm}}$
 respectively, where $F$ is given by \eqref{eqFormulaSospirata}. Immediate computations shows that
%
\begin{eqnarray}
F|_{J^3_{a^2_+}}&=&(3 u_{xxy} - u_{yyy})^2 + (3u_{xyy} -  u_{xxx})^2\, ,\label{eqSplitting1}\\
F|_{J^3_{a^2_{-}}}&=& (u_{xxx}-3u_{xxy}+3u_{xyy}-u_{yyy})(-u_{xxx}-3u_{xxy}-3u_{xyy}-u_{yyy})\, .\label{eqSplitting2}
\end{eqnarray}
%
Directly from \eqref{eqSplitting1} it follows that $\E_{a^2_+}$ is the \textit{intersection} of the two hypersurfaces
\begin{equation}\label{eqGiUnoGiDue}
3 u_{xxy} - u_{yyy}=0\,,\quad 3u_{xyy} - u_{xxx}=0\,
\end{equation}
of $J^3_{a^2_+}$, 
whereas  it follows from \eqref{eqSplitting2} that  $\E_{a^2_-}$ is the \textit{union} of the two hypersurfaces
\begin{equation}\label{eqEffeUnoEffeDue}
u_{xxx}-3u_{xxy}+3u_{xyy}-u_{yyy}=0\, ,\quad -u_{xxx}-3u_{xxy}-3u_{xyy}-u_{yyy}=0\, .
\end{equation}
Proposition \ref{PropPrimaProposizioneSpezzamento} can be also proved  by extending, respectively, submanifolds \eqref{eqGiUnoGiDue} and \eqref{eqEffeUnoEffeDue}
to the whole of $J^2_{\pm}$ by means of $\Aff(3)$.
\end{remark}

\subsection{A tensorial derivation of the (system of) $\Aff(3)$--invariant PDEs}\label{secTensDer}

%
\begin{proposition}\label{prop:Cxi.proportional}
Let $S=\{u=f(x,y)\}$ be a surface of $\R^3$, $h^\xi$ as in \eqref{eqn:affine.II.form} and $C^\xi$ the cubic form \eqref{eqDefPick}.
\begin{itemize}
    \item If $\det(\Hess(u))>0$, then the condition of  existence of a $1$-form $\alpha\in\Omega^1(S)$ such that
    \begin{equation}\label{eqn:Cxi.proportional.+}
C^\xi=\alpha\odot h^\xi \,,
\end{equation}
is independent of the choice of the transversal field $\xi$.
    \item If $\det(\Hess(u))<0$, then the condition of  existence  of a $1$-form $\alpha\in\Omega^1(S)$ and a symmetric $2$-form $\beta$ on $S$ such that
    \begin{equation}\label{eqn:Cxi.proportional.-}
C^\xi=\alpha\odot h^\xi + \beta\odot\theta\,,
\end{equation}
where $\theta\in\Omega^1(S)$ is a non-zero 1--form such that $(h^{\xi})^{-1}(\theta,\theta)=0$,
is independent of the choice of the transversal field $\xi$.
\end{itemize}
\end{proposition}
\begin{proof}
    By   direct computations.
\end{proof}
%
%
\begin{theorem}\label{th:1}
Let $S=\{u=f(x,y)\}$ be a surface of $\R^3$, $h^\xi$ as in \eqref{eqn:affine.II.form} and $C^\xi$ the cubic form \eqref{eqDefPick}. Then the function $f(x,y)$ is a solution to the equation ${\E}_+$ (resp., ${\E}_-$)  (cf. \eqref{eqn:E.meno}) if and only if condition ~\eqref{eqn:Cxi.proportional.+} (resp., \eqref{eqn:Cxi.proportional.-})  of Proposition \ref{prop:Cxi.proportional} is satisfied.
\end{theorem}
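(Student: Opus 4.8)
The plan is to reduce Theorem \ref{th:1} to a fiberwise computation over a single point of $\check J^2$, exactly in the spirit of the proof of Proposition \ref{PropPrimaProposizioneSpezzamento}. First I would observe that both sides of the claimed equivalence are $\Aff(3)$--invariant: the PDEs $\E_+$ and $\E_-$ are $\Aff(3)$--invariant by Proposition \ref{propPrimaFormInvarianza}, and the tensorial conditions \eqref{eqn:Cxi.proportional.+}, \eqref{eqn:Cxi.proportional.-} are manifestly invariant under the affine group (affine transformations carry surfaces to surfaces, transversal fields to transversal fields, and the cubic form $C^\xi$ and the form $h^\xi$ transform naturally, by their very construction in Section \ref{subAffCaseR3}); moreover, by Proposition \ref{prop:Cxi.proportional}, the conditions do not depend on the chosen transversal field $\xi$. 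Hence it suffices to check the equivalence at the two distinguished $2$--jets $a^2_\pm = [Q_\pm]^2_{(0,0)}$ of \eqref{eqDefPiPiuMeno}--\eqref{eqPARAB}, since $\Aff(3)$ acts transitively on $\check J^2$ (with the two orbits $J^2_\pm$ distinguished by the sign of $\det\Hess$), and conjugating by a group element carrying $a^2$ to $a^2_\pm$ transports both conditions.

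Next I would carry out the fiberwise computation. Fix the surface $S = S_{Q_\pm}$ at the origin, so that the second--order data are normalized: $f_x=f_y=0$ and $\Hess(f)=\mathrm{diag}(1,\pm1)$. Choosing the convenient transversal field $\xi=\partial_u$ (allowed by $\xi$--independence), formula \eqref{eqn:A} gives $A=1$, $B=1$, so by \eqref{eqn:h.xi.local} one has $h^\xi_{ij}=f_{ij}$, i.e. $h^\xi = dx^2 \pm dy^2$ at the origin, and by \eqref{C.xi.coord} the cubic form simplifies to
\begin{equation*}
C^\xi_{ijk} = f_{ijk} + \xi^m\bigl(f_{mj}f_{ki}+f_{im}f_{kj}+f_{km}f_{ij}\bigr) = f_{ijk}
\end{equation*}
at the origin, because $\xi^m=\delta^m_3$ while $f_{mj}$ only has indices $m\in\{1,2\}$, killing the quadratic term. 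Thus at $a^2_\pm$ the tensor $C^\xi$ is just the symmetric $3$--tensor of third derivatives $(u_{xxx},u_{xxy},u_{xyy},u_{yyy})$, and $h^\xi$ is the standard positive (resp. split) form on $\R^2$.

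It then remains to identify, algebraically, when a symmetric $3$--tensor $C$ on a $2$--dimensional space equipped with a nondegenerate symmetric bilinear form $h$ satisfies $C = \alpha \odot h$ (in the positive case) or $C = \alpha\odot h + \beta\odot\theta$ with $\theta$ null for $h^{-1}$ (in the split case), and to match this with the vanishing of $F|_{J^3_{a^2_\pm}}$ computed in \eqref{eqSplitting1}--\eqref{eqSplitting2}. In the positive case, $C=\alpha\odot h$ means $C$ lies in the image of the symmetrized multiplication by $h$, which on $S^3(\R^2)^*$ is a $2$--dimensional subspace; writing $\alpha = a\,dx + b\,dy$ and expanding $\alpha\odot h$ one reads off $u_{xxx}=3a$, $u_{xxy}=b$, $u_{xyy}=a$, $u_{yyy}=3b$, whence the solvability is precisely $u_{xxx}=3u_{xyy}$ and $u_{yyy}=3u_{xxy}$, i.e. the two equations \eqref{eqGiUnoGiDue}, which by \eqref{eqSplitting1} is exactly $F|_{J^3_{a^2_+}}=0$. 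In the split case the analogous bookkeeping, with $\theta$ a null covector for $h=dx^2-dy^2$ (e.g. $\theta = dx\pm dy$), shows that $C=\alpha\odot h+\beta\odot\theta$ holds if and only if the relevant resultant vanishes, which one checks coincides with the product in \eqref{eqSplitting2}, i.e. $F|_{J^3_{a^2_-}}=0$. I expect the main obstacle to be this last linear--algebra matching in the split signature: keeping track of which contraction conditions on $C$ are equivalent to membership in the subspace $\alpha\odot h + \beta\odot\theta$ of $S^3(\R^2)^*$, and verifying cleanly that it reproduces \eqref{eqSplitting2} rather than some spurious variant, is the delicate bookkeeping step; everything else is either invariance or the already--normalized computation above (and the footnote's claim about replacing $C^\xi$ by the symmetrization of $\nabla^\xi h^\xi$ follows because the two differ, by \eqref{eqDefPick}, by $\tau^\xi\odot h^\xi$, which is absorbed into the $\alpha\odot h^\xi$ term).
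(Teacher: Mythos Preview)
Your approach is correct and, interestingly, it is exactly the alternative proof that the paper itself sketches in the Remark immediately following Theorem~\ref{th:1}. The paper's own proof proceeds differently: it works directly in general coordinates over any point of $\check J^2$, writing out condition~\eqref{eqn:Cxi.proportional.+} as the identity~\eqref{eqn:H3.prop.H2.H1.1}, eliminating the parameters $\alpha_1,\alpha_2$ to obtain the explicit system~\eqref{eqn:sys.to.be.studied}, and then checking that this system coincides with the one coming from~\eqref{eqSplittingBrutale}; the negative case is handled analogously via~\eqref{eqn:H3.prop.H2.H1} and~\eqref{eqn:union.PDEs}. Your route instead invokes the $\Aff(3)$--invariance of both sides to reduce to the single fibres over $a^2_\pm$, where the linear algebra is very clean in the positive case.

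Two remarks are worth making. First, your choice $\xi=\partial_u$ actually gives $C^\xi_{ijk}=f_{ijk}$ and $h^\xi_{ij}=f_{ij}$ \emph{identically}, not just at the origin (since $\xi^1=\xi^2=0$ kills the quadratic term in~\eqref{C.xi.coord} everywhere); so once that choice is made you are one step away from the paper's direct argument, and the fibre reduction, while perfectly valid, is not strictly necessary. Second, in the Lorentzian case you stop short of the actual verification. It can be finished cleanly: with $h=dx^2-dy^2$ and null covectors $e_\pm=dx\pm dy$, condition~\eqref{eqn:Cxi.proportional.-} for a fixed $\theta=e_\pm$ amounts to $e_\pm\mid C$ as a cubic in $dx,dy$, i.e.\ $C(\mp1,1)=0$, which gives precisely the two linear factors~\eqref{eqEffeUnoEffeDue}; allowing either choice of null $\theta$ yields the union, matching~\eqref{eqSplitting2}. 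With that paragraph added, your argument is complete and arguably more conceptual than the paper's brute-force elimination, at the cost of relying on the invariance step that you assert but do not spell out.
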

\begin{proof}

To begin with, let us consider the case when condition \eqref{eqn:Cxi.proportional.+} holds true.
Taking into account the local expression of $C^\xi$ and that $h^\xi$ is proportional to the Hessian matrix of $u=f(x,y)$, it is easy to see that condition \eqref{eqn:Cxi.proportional.+} reads
\begin{equation}\label{eqn:H3.prop.H2.H1.1}
(u_{xxx}dx^3+3u_{xxy}dx^2dy+3u_{xyy}dxdy^2+u_{yyy}dy^3)=(\alpha_1dx+\alpha_2dy)(u_{xx}dx^2+2u_{xy}dxdy+u_{yy}dy^2)\,,
\end{equation}
for some functions $\alpha_1,\alpha_2$.
A direct computation shows that, after eliminating $\alpha_1$ and $\alpha_2$,  condition \eqref{eqn:H3.prop.H2.H1.1} becomes a system that locally describes $\E_+$:
\begin{equation}\label{eqn:sys.to.be.studied}
\E_+:
    \left\{
    \begin{array}{l}
    u_{xx}^2u_{yyy}-3u_{xx}u_{yy}u_{xxy} + 2u_{xy}u_{yy}u_{xxx}=0\,,
    \\
    u_{yy}^2u_{xxx} -3u_{xx}u_{yy}u_{xyy} + 2u_{xy}u_{xx}u_{yyy} =0\,.
    \end{array}
    \right.
\end{equation}
System \eqref{eqn:sys.to.be.studied} coincides with the system coming from \eqref{eqSplittingBrutale} in the case when $\det(\Hess(u))>0$.

\smallskip\noindent
The case when condition \eqref{eqn:Cxi.proportional.-} holds true can be treated analogously: indeed, such condition reads
\begin{multline}\label{eqn:H3.prop.H2.H1}
(u_{xxx}dx^3+3u_{xxy}dx^2dy+3u_{xyy}dxdy^2+u_{yyy}dy^3)=(\alpha_1dx+\alpha_2dy)(u_{xx}dx^2+2u_{xy}dxdy+u_{yy}dy^2) +
\\
(\beta_{11}dx^2+2\beta_{12}dxdy+\beta_{22}dy^2)\left( \left(u_{xy}\pm \sqrt{-u_{xx}u_{yy}+u_{xy}^2}\right)dx+u_{yy}dy\right)\,,
\end{multline}
for some functions $\alpha_1,\alpha_2,\beta_{11},\beta_{12},\beta_{22}$. 
A direct computation shows that after eliminating $\alpha_1,\alpha_2,\beta_{11},\beta_{12},\beta_{22}$, one obtains two equations (see~\eqref{eqn:union.PDEs}) locally describing $\E_-=\E_-^1\cup\E_-^2$,
where $\E_-^1$ corresponds to the PDE obtained by equating to zero the expression given by the first two lines of~\eqref{eqn:union.PDEs}, whereas $\E_-^2$ corresponds to the PDE obtained by equating to zero the expression  given by the last two lines of~\eqref{eqn:union.PDEs}.
\end{proof}
Theorem \ref{th:1} gives a geometric interpretation of solutions of  equations  \eqref{eqn:Cxi.proportional.+} and \eqref{eqn:Cxi.proportional.-}.
Equation \eqref{eqn:Cxi.proportional.+} means that the cubic form $C^{\xi}$  is  divided  by the metric $h^{\xi}$, whereas equation \eqref{eqn:Cxi.proportional.-} means that the  remainder of the division of $C^{\xi}$ by $h^{\xi}$ is a decomposable cubic form, more precisely a product of an isotropic 1-form and a quadratic form.

\begin{remark}
Theorem \ref{th:1} might also be proved 
by working in a specific fibre of $\pi_{3,2}$ and then acting by $\Aff(3)$, see   Remark~\ref{rem:Giovanni.1}. We will check  this only in the case when $\det(\Hess(u))>0$, since the case of a negative $\det(\Hess(u))$ is virtually the same: if we fix the point
$$
a^2_+=(0,0,0,0,0,1,0,1)\in {J}^2_+\,,
$$
then condition \eqref{eqn:H3.prop.H2.H1.1} becomes
 \begin{equation*}
(u_{xxx}dx^3+3u_{xxy}dx^2dy+3u_{xyy}dxdy^2+u_{yyy}dy^3)=(\alpha_1 dx+\alpha_2 dy)(dx^2+dy^2)\, ,
\end{equation*}
i.e.,
$$
u_{xxx}=\alpha_1\,,\,\,3u_{xxy}=\alpha_2\,,\,\,3u_{xyy}=\alpha_1\,,\,\,u_{yyy}=\alpha_2\,.
$$
By eliminating $\alpha_1$ and $\alpha_2$, we obtain the system \eqref{eqGiUnoGiDue}.
\end{remark}

\section{$\Aff(3)$--invariant PDEs as extensions of invariant subsets of the fiber of $J^3$}\label{secCasAff}

\subsection{Preliminary results needed to apply the main theorem}

We focus now on the three--dimensional affine space $\R^3=\Aff(3)/\GL(3)$, regarded as a homogeneous manifold $J^0=G/H$, with the origin $a^0:=(0,0,0)$. In order to apply the algorithm outlined in Theorem \ref{thMAIN1}, we need fiducial hypersurfaces for the affine group $\Aff(3)$, and we need to compute the stabilizer subgroups $H^{(k)}$.
\begin{lemma}\label{lem.fid.parab.hyperb}
The paraboloid $S_+$ (resp. the hyperboloid $S_-$)
\begin{equation}\label{eq:fiducial.hyper.Gianni}
S_{\pm}:=\left\{u=Q_\pm(x,y)\right\}\, ,
\end{equation}
where
\begin{equation}\label{eqQuPiuMeno}
    Q_\pm(x,y)=\frac12(x^2\pm y^2)\, ,
\end{equation}
 is a   fiducial hypersurface of order 2 for the affine group $\Aff(3)$ at origin $a^0$.
\end{lemma}
\begin{proof}
A straightforward computation shows that both $S_\pm$ fulfill all conditions of Definition~\ref{defMAIN}, see also~\cite{AMMv2_2022}.
\end{proof}
%
We will denote by
\begin{equation}\label{eqDefPiPiuMeno}
a^k_\pm:=[S_\pm]_{(0,0)}^k\,,
\end{equation}
where $S_\pm$ is given by \eqref{eq:fiducial.hyper.Gianni}, the origins associated to the fiducial hypersurfaces $S_\pm$.
\begin{lemma}[\cite{AMMv2_2022}]\label{lemStrutturaSottogruppiCasoAffine}
For $k=0,1,2,3$, the   subgroups $H^{(k)}$ of $G=\Aff(3)$ that stabilize  the origins $a_\pm^{k}$,   are:
\begin{eqnarray}
  H=H^{(0)}&=&   \GL(3)\, ,\nonumber\\
H^{(1)} &=&  (\R^2\rtimes\GL(2))\times\R^\times \, ,\label{eqH1casoAff}\\
H^{(2)} &=&  (\R^2\rtimes\OOO(1,1))\times\R^\times\, ,\quad a_-^{2}\in J^2_- \, ,\label{eqH2casoAff}\\
&=&  (\R^2\rtimes\OOO(2))\times\R^\times\, ,\quad a_+^{2}\in J^2_+\, ,\nonumber\\
H^{(3)} &=&  \OOO(1,1)\times\R^\times\, ,\quad a_-^{3}\in J^3_-\,\nonumber\\
&=&  \OOO(2)\times\R^\times\, ,\quad a_+^{3}\in J^3_+\, .\nonumber
\end{eqnarray}
\end{lemma}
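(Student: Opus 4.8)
The plan is to compute each stabilizing subgroup $H^{(k)} = G_{o^k}$ directly, working our way up the tower $J^0 \leftarrow J^1 \leftarrow J^2 \leftarrow J^3$, starting from $H = H^{(0)} = \GL(3)$, the stabilizer of the origin $o = (0,0,0)$ in $\SAff(3)$ acting on $\R^3$. (Here $\GL(3)$ should be read as the group of linear transformations of $\R^3$ fixing $o$ and compatible with the special-affine structure; since we work with connected groups the determinant normalization is harmless and the linear part acts on the three coordinates $(x,y,u)$.) At each stage I would use the prolongation formulas \eqref{eqn:prol.jet.spaces} together with the affine-bundle description of $J^\ell \to J^{\ell-1}$ from Proposition \ref{prop:affine.bundles}: an element $g \in H^{(k-1)}$ acts on the fibre $J^k_{o^{k-1}}$ affinely, with linear part the induced action on $S^k\TT^* \otimes \NN \simeq S^k \R^{2\,*}$, and $H^{(k)}$ is the subgroup of $H^{(k-1)}$ whose affine action on that fibre fixes $o^k$.

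First I would handle $J^1 = \mathbb{P}T^*M$: an element of $\GL(3)$ stabilizes the contact element $o^1 = [\SF]^1_o = T_o\SF$, which for any of our fiducial surfaces is the plane $\{u=0\} \subset \R^3$ (the first jets at $o$ of $\{u=0\}$, $Q_+$, $Q_-$ all coincide). The stabilizer of this hyperplane inside $\GL(3)$ consists of block-triangular matrices preserving the splitting $\R^2 \oplus \R\partial_u$ up to the off-diagonal shear; unwinding which of these also respect the jet structure gives the semidirect/product form $H^{(1)} = (\R^2 \rtimes \GL(2)) \times \R^\times$, the $\R^2$ being the shear, $\GL(2)$ the linear action on the tangent plane, and $\R^\times$ the scaling in the $u$-direction. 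Next, passing to $J^2$: by Lemma \ref{lemma.ker.vector} the fibre $J^2_{o^1}$ is $S^2\R^{2\,*} \otimes \NN$, and by Remark \ref{rem.Taylor} the point $o^2 = [\SF]^2_o$ corresponds to the Hessian $\frac12(x^2\pm y^2)$, i.e. the quadratic form $dx^2 \pm dy^2$ valued in $\partial_u$. The linear part $\GL(2)\times\R^\times$ of $H^{(1)}$ acts on this by $q \mapsto \lambda^{-1}\, g^{*}q$ (the $\R^\times$ rescaling $\partial_u$); stabilizing the standard definite (resp. split) form $dx^2\pm dy^2$ forces the $\GL(2)$-part into $\OOO(2)$ (resp. $\OOO(1,1)$), while the shear $\R^2$ survives intact and $\R^\times$ survives. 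This yields $H^{(2)} = (\R^2 \rtimes \OOO(2))\times\R^\times$ over $J^2_+$ and $(\R^2\rtimes\OOO(1,1))\times\R^\times$ over $J^2_-$.

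Finally, for $J^3$: the fibre $J^3_{o^2}$ is $S^3\R^{2\,*}\otimes\NN$, the space of cubic forms, and $o^3 = [\SF]^3_o$ is the zero cubic since $\SF$ is itself a quadric (its third derivatives vanish). The point $o^3$ is thus the origin of the fibre, so the affine action of $H^{(2)}$ reduces to its linear action, and I must find which elements of $H^{(2)}$ act trivially — no, rather, which fix $o^3 = 0$: all of them fix $0$, so at first glance $H^{(3)} = H^{(2)}$, which is wrong. The resolution, and this is the step I expect to be the main obstacle, is that the shear subgroup $\R^2 \subset H^{(2)}$ does not act linearly on $J^3_{o^2}$: a shear $(x,y,u)\mapsto(x,y,u + ax + by)$, though it fixes $[\SF]^1_o$ and $[\SF]^2_o$, produces a genuine translation on the fibre $J^3_{o^2}$ (it changes the cubic part by a term built from the quadratic part of $\SF$ — concretely the prolongation formula feeds second derivatives into the third-derivative components), so it moves $o^3$. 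Hence requiring $g \cdot o^3 = o^3$ kills the $\R^2$ shear and leaves exactly $\OOO(2)\times\R^\times$ (resp. $\OOO(1,1)\times\R^\times$). I would make this precise by writing out the prolongation to $J^3$ of the shear vector fields $x\partial_u, y\partial_u$ from the list \eqref{eqn:aff.vectors} using \eqref{eqn:prol.J3} and checking that, evaluated at $o^3$ sitting over $o^2 = [Q_\pm]^2_{(0,0)}$, the $\partial_{u_{xxx}},\dots,\partial_{u_{yyy}}$-components are nonzero (they are built from $u_{xx},u_{xy},u_{yy}$, which equal $\pm1,0,\pm1$ there). This confirms the last two lines of the lemma and completes the induction.
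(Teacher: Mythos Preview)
Your overall strategy---climbing the tower $J^0 \to J^1 \to J^2 \to J^3$ and computing each stabilizer as the subgroup of the previous one that fixes the new jet coordinate---is exactly right, and the paper states this lemma without proof, so your plan is the natural one. However, you have misidentified the $\R^2$ factor. The shear $(x,y,u)\mapsto(x,y,u+ax+by)$ that you name, generated by $x\partial_u$ and $y\partial_u$, does \emph{not} lie in $H^{(1)}$: its prolongation to $J^1$ has $\partial_{u_x}$-component equal to $1$ at $o^1$ (just compute $X_1 = D_x(x) = 1$ for $X = x\partial_u$), so it moves $o^1$ already. If you actually carried out the verification you propose in your last paragraph, you would discover this immediately.

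The correct $\R^2$ is the \emph{other} shear, generated by $u\partial_x$ and $u\partial_y$, i.e., $(x,y,u)\mapsto(x+au,\,y+bu,\,u)$: this is what preserves the tangent plane $\{u=0\}$ at the origin (in matrix terms, it is the upper--right block in the lower--triangular decomposition preserving that plane). One checks directly that the $J^1$- and $J^2$-prolongations of $u\partial_x$, $u\partial_y$ vanish at $o^1$ and at $o^2$ (all the relevant components carry a factor of $u_x$ or $u_y$), so this $\R^2$ survives into $H^{(2)}$. Your mechanism for why it dies at order three is, however, exactly right once applied to the correct vector fields: for $X = u\partial_x$ one finds $X_{111} = -3u_{xx}^2 + (\text{terms with }u_x)$, which equals $-3$ at $o^3$ over $o^2 = [Q_\pm]^2_{(0,0)}$, so the shear genuinely translates the cubic fibre and is killed by the condition $g\cdot o^3 = o^3$. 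With this correction your argument goes through.
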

%
%
%

%

\begin{remark}
Since $\OOO(2)\times\R^\times$ is a $\Z_2$--covering of the  conformal group $\mathsf{CO}(2)$, we will employ the notation $\CO(2):=\OOO(2)\times\R^\times$; an analogous symbol $\CO(1,1)$ will be used for the Lorentzian signature case.
\end{remark}


Now we describe the linear subspace $W_\pm^3$   that is associated with  the fiducial hypersurface $S_\pm$ introduced in Lemma~\ref{lem.fid.parab.hyperb} above, cf. Definition~\ref{defMAIN}.\par 
If we start from the fiducial surface  $S_{\pm}$, then the (open) orbit of   $a^2_{\pm}$ turns out to be ${J}^{2}_{\pm}$ and the linear space $W^3_{\pm}$ is the (two--dimensional) linear subspace  of   the (four--dimensional) space $S^3\R^{2\ast}$ made of cubic forms on $\R^2$  that are proportional to $Q_{\pm}(x,y)$ via a linear factor:
$$
W^3_{\pm}=\R^{2\,\ast}\odot \R Q_{\pm}\,.
$$
%
%
%
%
Taking into account Remark~\ref{rem.Taylor},
Theorem \ref{thMAIN1} claims  that    the only $\Aff(3)$--invariant third--order conditions  we can impose on a surface $S\subset\R^3$ at a point $p\in S$ are:
 \begin{itemize}
\item[$(+)$] if $p$ is such that $[S]_p^2$ is in the same $\Aff(3)$--orbit as the paraboloid $S_+$: the condition is that, by taking the difference $[S]_p^3-[S_+]_p^3$  modulo $W^3_+$, the result lands in a $\CO(2)$--invariant proper subset $\overline{\Sigma}$ in $S^3\R^{2\,\ast}/W^3_+$;
\item[$(-)$] if $p$ is such that $[S]_p^2$ is in the same $\Aff(3)$--orbit as the hyperboloid $S_-$: the condition is  that, by taking the difference $[S]_p^3-[S_-]_p^3$ modulo $W^3_-$,
 the result lands in a $\CO(1,1)$--invariant proper subset $\overline{\Sigma}$ in $S^3\R^{2\,\ast}/W^3_-$;
\item[$(0)$] if $p$ is not any of the previous ones: the condition coincides with the prolongation to $J^3$ of the second--order PDE  $\det u_{ij}=0$.
\end{itemize}

Now we are ready  to apply Theorem~\ref{thMAIN1} to the case of $G=\Aff(3)$ acting on $J^0=\R^3$.

%
%
%


\begin{theorem}\label{corCasoAff}
Let $Q_\pm$  be the quadratic forms and   $S_\pm$ be the fiducial hypersurfaces introduced in Lemma~\ref{lem.fid.parab.hyperb} above.  Let
\begin{equation}\label{eqS03.traceless}
S_0^3\R^{2\,\ast}:=\frac{S^3\R^{2\,\ast}}{\R^{2\,\ast}\odot{\R Q_\pm}}\,.
\end{equation}
Then, for any $\CO(2)$--invariant (resp., $\CO(1,1)$--invariant) subset
\begin{equation*}
\overline{\Sigma}\subset S_0^3\R^{2\,\ast}\,,
\end{equation*}
the condition that the difference $[S]_p^3-[S_\pm]_p^3$ modulo $\R^{2\,\ast}\odot{\R Q_\pm}$  be $\overline{\Sigma}$--valued defines  the   $\Aff(3)$--invariant (system of) third--order PDE(s) $\E_\Sigma\subset J^3_\pm $.   
\end{theorem}





We recall that  the graphs of the functions $Q_{\pm}$ given by~\eqref{eqQuPiuMeno}  are fiducial hypersurfaces in the sense of Definition \ref{defMAIN}. 
In particular,   they describe two open $\Aff(3)$--orbits in $J^2$, that we have denoted by $J^2_+$ and $J^2_-$, 
respectively.\par
%



In order to use  Theorem \ref{corCasoAff} to produce examples of $\Aff(3)$--invariant (systems of) PDEs, 
we need   to study $\CO(2)$-- or $\CO(1,1)$--invariant subset $\overline{\Sigma}\subset S_0^3\R^{2\,\ast}$: it turns out that such an invariance 
implies that $\overline{\Sigma} $ can only take two forms, as it will be proved below: 
the aforementioned two possibilities will correspond exactly to the $\Aff(3)$--invariant PDEs $\E_+$ and $\E_-$, introduced earlier, see~\eqref{eqn:E.meno}.



We warn the reader that, while keep using coordinates $\{x,y,u\}$ in $\R^3$, the same symbols $\{x,y\}$ will also denote a basis of $\R^{2\,\ast}$: accordingly,
\begin{equation}\label{eqStCoordS2R3}
    S^3\R^{2\,\ast}=\Span{x^3, 3x^2y, 3xy^2, y^3}\, ,
\end{equation}
and this will be the standard basis of $S^3\R^{2\,\ast}$. Moreover, since the point $a_\pm^{3}$ (see~\eqref{eqDefPiPiuMeno}) allows to identify $J^3_{a_\pm^{2}}$ with $S^3\R^{2\,\ast}$, the standard coordinates $\{u_{xxx},u_{xxy},u_{xyy},u_{yyy}\}$ of  $J^3_{a^{2}_{\pm}}$ turn out to be the dual coordinates   to \eqref{eqStCoordS2R3}.


\subsection{The case when the fiducial hypersurface is the paraboloid $S_+$}\label{subsecRieman}
In this section we set $Q=Q_+$. It will be convenient to identify   the  space
\begin{equation}\label{eqTrFreeCubQUO}
S_0^3\R^{2\,\ast}=\frac{S^3\R^{2\,\ast}}{\R^{2\,\ast}\odot \R Q}=\frac{\Span{x^3, 3x^2y, 3xy^2, y^3}}{\Span{xQ, yQ}}=\frac{S^3\R^{2\,\ast}}{\R^{2\,\ast}\odot \R Q}=\frac{\Span{x^3, 3x^2y, 3xy^2, y^3}}{\Span{x^3+xy^2,yx^2+y^3}}
\end{equation}
with its subspace
\begin{equation}
 \Span{x^3, y^3}\, .\label{eqTrFreeCubSUB}
\end{equation}
Keeping this identification in mind, the same symbol $S_0^3\R^{2\,\ast}$ will be used for both \eqref{eqTrFreeCubQUO} and \eqref{eqTrFreeCubSUB}; it is also necessary to introduce a basis
\begin{equation}\label{eqxieta}
    \{\xi,\eta\}
\end{equation}
of $(S_0^3\R^{2\,\ast})^\ast$ that is dual to $\{x^3,y^3\}$. \par
%
%
To study the $\CO(2)$--invariant subsets $\overline{\Sigma}$ of the two--dimensional space $S_0^3\R^{2\,\ast}$, we have to clarify first the $\CO(2)$--action on it.
\subsubsection{The action of $\CO(2)$ on $\Span{x^3, y^3}$}\label{secCasoRiem}
Since
\begin{equation}\label{eqDecomposizioneDelCxxxo}
\CO(2)=\OOO(2)\times\R^\times\, ,
\end{equation}
we can consider separately the action of a rotation, a reflection and a dilation. To begin with, if
\begin{equation*}
R_t:=\left(\begin{array}{cc}\cos(t) & \sin(t) \\-\sin(t) & \cos(t)\end{array}\right)\in \SO(2)
\end{equation*}
is a rotation,
then
\begin{align*}
R_t^*(x^3)&=(R_t^*(x))^3\\
&=(x\cos(t)-y\sin(t))^3\\
&=x^3\cos^3(t)-3x^2y\cos^2(t)\sin(t)+3xy^2\cos(t)\sin^2(t)-y^3\sin^3(t)\\
&=x^3\cos^3(t)+3 y^3\cos^2(t)\sin(t)-3x^3\cos(t)\sin^2(t)+y^3\sin^3(t)\\
&=\cos(3t)x^3+ \sin(3t)y^3\, ,\\
R_t^*(y^3)&=-\sin(3t) x^3+\cos(3t)y^3\, .
\end{align*}
In other words, if we let a $2\times 2$ matrix act on $S_0^3\R^{2\,\ast}$ by identifying the latter with $\R^2$ via the basis  $\{x^3, y^3\}$, then $R_t\in\SO(2)$  is the rotation $R_{-3t}$; similarly, the reflection  $x\longrightarrow -x$ corresponds to the reflection $x^3\longrightarrow -x^3$ and the scaling $(x,y)\longrightarrow \lambda (x, y)$ will correspond to the scaling $(x^3,y^3)\longrightarrow \lambda^3(x^3,y^3)$. In view of \eqref{eqDecomposizioneDelCxxxo}, we can conclude that the $\CO(2)$--action on  $S_0^3\R^{2\,\ast}$ can be identified with the standard $\CO(2)$--action  on $\R^2$, that possesses only two invariant subsets: $\{0\}$ and $\R^2$ itself.\par
Since we are not interested in trivial PDEs, the only choice for the subset $\overline{\Sigma}$ in Theorem \ref{corCasoAff} is $\overline{\Sigma}=\{0\}$.

\subsubsection{The system of PDEs associated with $\overline{\Sigma}=\{0\}$} According to Theorem \ref{corCasoAff}, the (system of) PDEs associated with $\overline{\Sigma}=\{0\}$ describes surfaces $S$ of $\R^3$ such that $[S]_p^3-[S_+]^3_p$ lands in $\Sigma=W^+=\R^{2\,\ast}\odot \R Q$, that is, a codimension--two linear subspace of $S^3\R^{2\,\ast}$. By employing the aforementioned dual coordinates
$\{u_{xxx},u_{xxy},u_{xyy},u_{yyy}\}$,
 we see that the subspace $\R^{2\,\ast}\odot \R Q$ is cut out precisely  by the two linear equations
\begin{equation}\label{eqSystAff3}
u_{xxx}-3u_{xyy}=0\, ,\quad u_{yyy}-3u_{xxy}=0\, .
\end{equation}
%
Since the system \eqref{eqSystAff3} can be recast as a unique equation $(u_{xxx}-3u_{xyy})^2+(u_{yyy}-3u_{xxy})^2=0$, the fiber $(\E_\Sigma)_{a_+^{2}}\subset J^3_{a_+^{2}}$ of the $\Aff(3)$--invariant PDE $\E_\Sigma$ constructed by Theorem \ref{corCasoAff} can be given by
\begin{equation*}
    (\E_\Sigma)_{a_+^{2}}=\{f=0\}\subset J^3_{a_+^{2}}\, ,
\end{equation*}
where
\begin{equation*}
f(u_{xxx},u_{xxy},u_{xyy},u_{yyy})=(u_{xxx}-3u_{xyy})^2+(u_{yyy}-3u_{xxy})^2\,
\end{equation*}
is a function on $J^3_{a_+^{2}}$.
The  $H^{(2)}$--invariant subset $(\E_\Sigma)_{a_+^{2}}$ of ${J}^3_{a_+^{2}}$ can be extended  to an $H^{(1)}$--invariant subset
\begin{equation}\label{eqPrimoSpalmamentoCasoRiem}
(\E_\Sigma)_{a_+^{1}}=H^{(1)}\cdot (\E_\Sigma)_{a_+^{2}}\,
\end{equation}
  of $J^3_{a^1_+}$.
In fact, the group $H^{(1)}$ in formula~\eqref{eqPrimoSpalmamentoCasoRiem} can be replaced by its factor $\GL(2)$, that acts naturally on $J^2_{a_+^1}=\GL(2)\cdot   Q$. The latter is the orbit in $S^2\R^{2\,\ast}$ of $Q$, i.e., (one half) the Euclidean squared norm, whose associated matrix is $\tfrac{1}{2}I_2$:
\begin{equation*}
J^2_{a_+^1}=\{ A\cdot\left(\tfrac{1}{2}I_2\right)\cdot A^t\mid A\in\GL(2)\}=\{ A\cdot A^t\mid A\in\GL(2)\}\, .
\end{equation*}
Therefore,  any point $(u_{xx},u_{xy},u_{yy})\in J^2_{a_+^1}$ can be brought to the form $A\cdot A^t$ for some  $A\in\GL(2)\subset H^{(2)}$. It follows that
\begin{equation*}
(\E_\Sigma)_{(u_{xx},u_{xy},u_{yy})}=A\cdot (\E_\Sigma)_{a_+^{2}}\subset   J^3_ {(u_{xx},u_{xy},u_{yy})}
\end{equation*}
must be cut out by the equation
\begin{equation*}
F(u_{xx},u_{xy},u_{yy},u_{xxx},u_{xxy},u_{xyy},u_{yyy}):=A^{-1\,\ast}(f)=0\, ,
\end{equation*}
where $A^{-1}$ is regarded as a diffeomorphism from the fiber $J^3_ {(u_{xx},u_{xy},u_{yy})}$ to the fiber $J^3_{a^1_+}$, by applying the prolongation formula~\eqref{eqn:prol.jet.spaces} to the diffeomorphism $A^{-1}$ of $J^0$.
Then, 
the same function $F$, that has no dependency upon $x,y,u,u_x,u_y$, cuts out the whole equation $\E_\Sigma$ in $J^3_+ $.\par
Obtaining $F$ out of $f$ is not complicated:  it turns out that
\begin{align*}
A^{-1\,\ast}(u_{xxx}) &=\det(A)^{-3}(a_{22}^3 u_{xxx}-3 a_{22}^2 a_{12} u_{xxy}+3 a_{22}
   a_{12}^2 u_{xyy}+a_{12}^3 \left(-u_{yyy}\right))\,,\\
   A^{-1\,\ast}(u_{xxy}) &=\det(A)^{-3}(-a_{22}^2 a_{12} u_{xxx}+\left(2 a_{22} a_{12}^2+a_{11}
   a_{22}^2\right) u_{xxy}+\left(-a_{12}^3-2 a_{11} a_{22}
   a_{12}\right) u_{xyy}+a_{11} a_{12}^2 u_{yyy})\,,\\
   A^{-1\,\ast}(u_{xyy}) &=\det(A)^{-3}(a_{12}^2 a_{22} u_{xxx}+\left(-a_{12}^3-2 a_{11} a_{22}
   a_{12}\right) u_{xxy}+\left(a_{22} a_{11}^2+2 a_{12}^2
   a_{11}\right) u_{xyy}-a_{12} a_{11}^2 u_{yyy})\,,\\
   A^{-1\,\ast}(u_{yyy}) &=\det(A)^{-3}(-a_{12}^3 u_{xxx}+3 a_{12}^2 a_{11} u_{xxy}-3 a_{12}
   a_{11}^2 u_{xyy}+a_{11}^3 u_{yyy})\,,
\end{align*}
whence
\begin{align*}
F&=
 u_{xxx} \left(-6 a_{12} \left(a_{11}+a_{22}\right)
   \left(a_{12}^2+a_{22}^2\right){}^2 u_{xxy}+6
   \left(-a_{12}^4+a_{22}^2 a_{12}^2+4 a_{11} a_{22}
   a_{12}^2+a_{11}^2 \left(a_{12}^2-a_{22}^2\right)\right)
   \left(a_{12}^2+a_{22}^2\right) u_{xyy}+\right.\\
   &\left.+2 a_{12}
   \left(a_{11}+a_{22}\right) \left(3 a_{12}^4-a_{22}^2 a_{12}^2-8
   a_{11} a_{22} a_{12}^2-a_{11}^2 \left(a_{12}^2-3
   a_{22}^2\right)\right)
   u_{yyy}\right)+\left(a_{12}^2+a_{22}^2\right){}^3
   u_{xxx}^2+ \\
   & +u_{xxy} \left(6
   \left(a_{11}^2+a_{12}^2\right) \left(-a_{12}^4+a_{22}^2
   a_{12}^2+4 a_{11} a_{22} a_{12}^2+a_{11}^2
   \left(a_{12}^2-a_{22}^2\right)\right) u_{yyy}-18 a_{12}
   \left(a_{11}^2+a_{12}^2\right) \left(a_{11}+a_{22}\right)
   \left(a_{12}^2+a_{22}^2\right) u_{xyy}\right)+\\
   &+9
   \left(a_{12}^2+a_{22}^2\right){}^2 \left(a_{11}^2+a_{12}^2\right)
   u_{xxy}^2-6 a_{12} \left(a_{11}+a_{22}\right)
   \left(a_{11}^2+a_{12}^2\right){}^2 u_{xyy}
   u_{yyy}+9 \left(a_{12}^2+a_{22}^2\right)
   \left(a_{11}^2+a_{12}^2\right){}^2
   u_{xyy}^2+\left(a_{11}^2+a_{12}^2\right){}^3
   u_{yyy}^2\, ,
\end{align*}
up to   a nonzero factor $\det(A)^{-6}$. Without loss of generality,  the matrix $A$ has been assumed to be symmetric, meaning that
\begin{equation*}
A\cdot A^t=A^2=\left(
\begin{array}{cc}
 a_{11}^2+a_{12}^2 & a_{11} a_{12}+a_{22} a_{12} \\
 a_{11} a_{12}+a_{22} a_{12} & a_{12}^2+a_{22}^2 \\
\end{array}
\right)=\left(\begin{array}{cc}u_{xx} & u_{xy} \\u_{xy} & u_{yy}\end{array}\right)\, .
\end{equation*}
Taking the last relation into account, one finally obtains  $F=0$, where $F$ is given by \eqref{eqFormulaSospirata}.
%
%
\subsection{The case when the fiducial hypersurface is the hyperboloid $S_-$}\label{subsecLorentz}
If we set now $Q=Q_-$, then
the  $\Aff(3)$--invariant PDE $\E_\Sigma$ constructed by Theorem \ref{corCasoAff} will be a subset of $J^3_-$.
%
In analogy with \eqref{eqTrFreeCubQUO} we identify
\begin{equation*}
S_0^3\R^{2\,\ast}=\frac{\Span{x^3, 3x^2y, 3xy^2, y^3}}{\Span{x^3-xy^2,yx^2-y^3}}
\end{equation*}
 with the subspace $ \Span{x^3, y^3}$,
that we keep denoting by the same symbol $S_0^3\R^{2\,\ast}$.

\subsubsection{The action of $\CO(1,1)$ on $ \Span{x^3, y^3}$}
By employing hyperbolic rotations, the same technique used in Section~\ref{secCasoRiem} allows to show that
%
the action of $\CO(1,1)$ on $S_0^3\R^{2\,\ast}$ can be identified with the standard one on $\R^2$; however, differently from the case treated in Section \ref{subsecRieman}, in addition to $\overline{\Sigma}=\{0\}$, here we have a codimension--one $\OOO(1,1)$--invariant  submanifold, namely the (degenerate) quadric
\begin{equation*}
\overline{\Sigma}:=\{\xi^2-\eta^2=0\}\, ,
\end{equation*}
where $\xi$ and $\eta$ are defined by \eqref{eqxieta}.
As a manifold, $\overline{\Sigma}$ is singular, since the quadric $\xi^2-\eta^2=0$ is the union of the lines $\xi-\eta=0$ and $\xi+\eta=0$. Nevertheless, it will be easier to work with the whole quadric $\xi^2-\eta^2=0$, rather than with each factor separately.

\subsubsection{The  PDE associated with $\overline{\Sigma}:=\{\xi^2-\eta^2=0\}$}
It suffices to observe that, by the very definition of $\xi$ and $\eta$,
\begin{equation*}
\xi=u_{xxx}+3u_{xyy}\, ,\quad \eta=u_{yyy}+3u_{xxy}\, ,
\end{equation*}
whence   the $H^{(2)}$--invariant hypersurface $(\E_\Sigma)_{a_-^{2}}$ of $J^3_{a_-^{2}}$ associated with $\Sigma$ reads
\begin{equation}\label{eqDaRichiamareAllaFine1}
f(u_{xxx},u_{xxy},u_{xyy},u_{yyy})=(u_{xxx}+3u_{xyy})^2-(u_{yyy}+3u_{xxy})^2=0\, .
\end{equation}
As before, we use   $\GL(2)$, that acts naturally on $J^2_{a_-^{1}}=\GL(2)\cdot Q$, to bring   any point $(u_{xx},u_{xy},u_{yy})\in J^2_{a_-^{1}}$   to the form
\begin{equation*}
A\cdot \left(\begin{array}{cc}1 & 0 \\0 & -1\end{array}\right) \cdot A^t\, ,
\end{equation*}
 where    $A\in\GL(2)\subset H^{(2)}$.  The formulas for obtaining $F$ out of $f$ are analogous to the ones employed before and we omit them.  Surprisingly enough, the resulting PDE $F=0$ will be given exactly by the same function $F$ as above, that is, \eqref{eqFormulaSospirata}.


\section{Compatibility conditions and solutions in the case of the $\Aff(3)$--invariant system of PDEs}\label{sec:det.hess.maggiore.zero}

A system of PDEs whose number of equations is strictly greater than the number of unknown functions could be \emph{overdetermined}, i.e., it could possess a certain number of non--trivial compatibility conditions \cite{MR1240056}.

\smallskip
For instance, 
in the case when $\det(\Hess(u))>0$ we have obtained\footnote{In fact,  as we stressed in    Section \ref{subsecLorentz}, the same system  has been obtained  in the case $\det(\Hess(u))<0$ as well.}  the system \eqref{eqn:sys.to.be.studied} of two PDEs in one unknown function $u=u(x,y)$: we will see that it is indeed an overdetermined system.
%
Before computing 
the compatibility conditions of the system \eqref{eqn:sys.to.be.studied}, let us observe that functions $u=u(x,y)$, with either $u_{xx}=0$ or $u_{yy}=0$, are solutions to the system, so that
it makes  sense to assume, from now on, that both
$u_{xx}$ and $u_{yy}$ be nonzero.
Thus, we write system \eqref{eqn:sys.to.be.studied} as follows:
\begin{equation}\label{eqn:system.conv.form}
\E_+:
\left\{
\begin{array}{l}
u_{xxx}=-\frac{u_{xx}(2u_{xy}u_{yyy}-3u_{xyy}u_{yy})}{u_{yy}^2}\,,
\\
u_{xxy}=\frac{u_{xx}u_{yy}u_{yyy}-4u_{xy}^2u_{yyy}+6u_{xy}u_{xyy}u_{yy}}{3u_{yy}^2}\,.
\end{array}
\right.
\end{equation}
\subsection{Compatibility conditions and solutions of the system of PDEs \eqref{eqn:system.conv.form}}
The technique for obtaining the compatibility conditions is standard.
First, one takes the total derivatives of both sides of both the equations  the  system \eqref{eqn:system.conv.form} consists of: this allows   to express $u_{xxxx}$, $u_{xxxy}$ and $u_{xxyy}$ in terms of $u_{xyyy}$, $u_{yyyy}$, $u_{xyy}$, $u_{yyy}$ and second order derivatives; then one observes that the cross total differentiation $D_y(u_{xxx})-D_x(u_{xxy})$, that is
$$
-\frac{8}{3}\frac{(u_{xy}u_{yy}u_{yyyy}-2u_{xy}u_{yyy}^2+2u_{xyy}u_{yy}u_{yyy}-u_{xyyy}u_{yy}^2)(u_{xx}u_{yy}-u_{xy}^2)}{u_{yy}^4}\,,
$$
 has to vanish. Since we are assuming $\det(\Hess(u))\neq 0$ and $u_{yy}\neq 0$, the vanishing of the above expression  yields a fourth--order compatibility condition:
\begin{equation}\label{eqn:cond.4.ord}
    u_{xy}u_{yy}u_{yyyy}-2u_{xy}u_{yyy}^2+2u_{xyy}u_{yy}u_{yyy}-u_{xyyy}u_{yy}^2=0\,.
\end{equation}
Since \eqref{eqn:cond.4.ord} allows to express also $u_{xyyy}$ in terms of $u_{yyyy}$, $u_{xyy}$, $u_{yyy}$ and derivatives of second order, we find out that, in view of what already seen above, all but one fourth--order derivatives, i.e.,   $u_{xxxx}$, $u_{xxxy}$, $u_{xxyy}$ and $u_{xyyy}$,  can be expressed in terms of the remaining fourth--order derivative, i.e.,  $u_{yyyy}$, and of the third--order derivatives $u_{xyy}$, $u_{yyy}$, and of the  derivatives of second order as well.
This scheme repeats verbatim for fifth--order compatibility conditions.
Indeed, taking into account the relations that we have obtained so far, the fifth--order derivatives $u_{xxxxx}$, $u_{xxxxy}$, $u_{xxxyy}$, $u_{xxyyy}$, $u_{xyyyy}$ can be expressed in terms of $u_{yyyyy}$, $u_{yyyy}$, $u_{xyy}$, $u_{yyy}$, and of derivatives of second order: the vanishing of the cross total differentiation $D_y(u_{xxxx})-D_x(u_{xxxy})$,  that is
\begin{equation*}
\frac{1}{9}\frac{u_{xx}}{u_{yy}^5}(u_{xx}u_{yy}-u_{xy}^2)(9u_{yy}^2u_{yyyyy}-45u_{yy}u_{yyy}u_{yyyy}+40u_{yyy}^3)=0\, ,
\end{equation*}
gives then
\begin{equation}\label{eqn:conic.yu}
9u_{yy}^2u_{yyyyy}-45u_{yy}u_{yyy}u_{yyyy}+40u_{yyy}^3=0\, ,
\end{equation}
since both   $u_{xx}$ and  $u_{yy}$ are nonzero,  and we are assuming $\det(\Hess(u))\neq 0$. These results have been obtained also in \cite{Arnal.Valiquette} by using different techniques. Let us stress that \eqref{eqn:conic.yu} is actually a well-known ODE: its solution space is made of the conics in the $(y,u)$--plane, see, e.g., \cite{MR3253544,MR1240056}. In fact the following result holds.
\begin{proposition}\label{propProposizioncina}
The function $u=u(x,y)$ is a solution to \eqref{eqn:conic.yu} if and only if
\begin{equation*}
a(x)y^2+b(x)yu+c(x)u^2+d(x)y+e(x)u+f(x)=0\,.
\end{equation*}
\end{proposition}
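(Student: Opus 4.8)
The plan is to read \eqref{eqn:conic.yu} as an ordinary differential equation in the single variable $y$, with $x$ a silent parameter, and to solve it by exhibiting an explicit first integral built from a fractional power of $u_{yy}$. The starting point is the algebraic identity, valid wherever $u_{yy}\neq 0$,
\[
\partial_y^3\!\left((u_{yy})^{-2/3}\right)=-\tfrac{2}{27}\,(u_{yy})^{-11/3}\left(9u_{yy}^2u_{yyyyy}-45u_{yy}u_{yyy}u_{yyyy}+40u_{yyy}^3\right),
\]
which one checks by three differentiations (up to replacing $u$ by $-u$ we may assume $u_{yy}>0$, so the fractional powers make sense; the locus where $u_{yy}\equiv 0$, on which $u$ is affine in $y$ and hence trivially of the claimed form with $a=b=c=0$, is treated separately). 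Thus on $\{u_{yy}\neq 0\}$ equation \eqref{eqn:conic.yu} is equivalent to $\partial_y^3\!\left((u_{yy})^{-2/3}\right)=0$, i.e.\ to the existence of functions $\alpha,\beta,\gamma$ of $x$ with $(u_{yy})^{-2/3}=P(x,y):=\alpha(x)y^2+\beta(x)y+\gamma(x)$.

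For the \emph{only if} direction I would integrate twice in $y$. From $u_{yy}=P^{-3/2}$ and the elementary primitives $\int P^{-3/2}\,dy=\dfrac{2(2\alpha y+\beta)}{(4\alpha\gamma-\beta^2)\sqrt{P}}$ and $\int\dfrac{2\alpha y+\beta}{\sqrt P}\,dy=2\sqrt P$ (with the obvious modification when $4\alpha\gamma-\beta^2=0$, where $P$ is the square of an affine form in $y$), one obtains $u=k(x)\sqrt{P}+C_1(x)\,y+C_2(x)$ for suitable functions $k,C_1,C_2$ of $x$. Squaring the relation $u-C_1y-C_2=k\sqrt{P}$ and expanding $P$ gives precisely
\[
a(x)y^2+b(x)yu+c(x)u^2+d(x)y+e(x)u+f(x)=0,
\]
with $c=1$ in the nondegenerate case (and $c=0$ in the limiting one).

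For the \emph{if} direction, set $G(y,u):=a y^2+byu+cu^2+dy+eu+f$ and differentiate $G(y,u(y))\equiv 0$ twice with respect to $y$; this yields $u_{yy}=-N/G_u^{\,3}$ where $N:=G_{yy}G_u^2-2G_{yu}G_yG_u+G_{uu}G_y^2$. Writing $Q:=\Hess_{(y,u)}(G)$ (a constant symmetric matrix), one checks, using $Q\,\mathrm{adj}(Q)=\det(Q)\,I$ and $v^{T}Qv=2(G-\ell^{T}v-f)$ with $v=(y,u)^{T}$ and $\ell=(d,e)^{T}$, that $N=2\det(Q)\,G+c_0(x)$ with $c_0(x)=\ell^{T}\mathrm{adj}(Q)\ell-2\det(Q)f$; hence $N$ restricted to $\{G=0\}$ depends only on $x$. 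Moreover the identity $(G_u)^2=4c\,G+\big((b^2-4ac)y^2+(2be-4cd)y+(e^2-4cf)\big)$ shows that $(G_u)^2$ restricted to $\{G=0\}$ is a polynomial in $y$ of degree at most two. Therefore $(u_{yy})^{-2/3}=(-c_0(x))^{-2/3}(G_u)^2\big|_{\{G=0\}}$ is such a polynomial, its third $y$-derivative vanishes, and \eqref{eqn:conic.yu} follows.

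All computations here are routine. I expect the only delicate point to be the bookkeeping of the degenerate situations rather than the main identity: the vanishing of $4\alpha\gamma-\beta^2$ (where $P$ is a perfect square and the conic is a hyperbola of the form $(y-y_0)(u-\text{affine})=\text{const}$), the vanishing of $c_0(x)$ (reducible conics, pairs of lines), and the locus $u_{yy}=0$ (affine $u$). In each of these cases one has $u_{yy}\equiv 0$ along the relevant branch and \eqref{eqn:conic.yu} holds trivially, while the corresponding curve is still captured by the displayed quadratic relation; assembling these cases is the fiddly part of the argument.
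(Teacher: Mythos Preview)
The paper does not actually prove this proposition: it states that \eqref{eqn:conic.yu} is the well--known ODE characterising conics in the $(y,u)$--plane and refers the reader to the literature (\cite{MR3253544,MR1240056}). Your argument supplies what the paper omits, and it is precisely the classical one: the identity
\[
\partial_y^3\!\left((u_{yy})^{-2/3}\right)=-\tfrac{2}{27}\,(u_{yy})^{-11/3}\bigl(9u_{yy}^2u_{yyyyy}-45u_{yy}u_{yyy}u_{yyyy}+40u_{yyy}^3\bigr)
\]
is correct, and both implications are handled cleanly. The computation for the ``if'' direction via $N=\nabla G^{T}\mathrm{adj}(Q)\nabla G=2\det(Q)\,G+c_0(x)$ and $(G_u)^2=4cG+\text{(quadratic in }y\text{)}$ is a nice way to avoid brute--force differentiation.

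One slip in your closing paragraph: it is not true that ``in each of these cases one has $u_{yy}\equiv 0$''. In the case $4\alpha\gamma-\beta^2=0$ (so $P=(\ell y+m)^2$), integrating $u_{yy}=(\ell y+m)^{-3}$ gives $u=\tfrac{1}{2\ell^2}(\ell y+m)^{-1}+C_1y+C_2$, whence $(u-C_1y-C_2)(\ell y+m)=\text{const}$ --- a genuine hyperbola with $u_{yy}\neq 0$. You in fact handle this correctly earlier (your parenthetical ``obvious modification''), and on the ``if'' side this conic has $c_0=-2k\neq 0$, so your main computation covers it; it is only the summary sentence at the end that misplaces it among the $u_{yy}=0$ cases. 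The actual $u_{yy}\equiv 0$ degenerations are the reducible conics ($c_0=0$, pairs of lines) and the affine $u$'s.
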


Now we wonder about higher--order compatibility conditions: since
 equation~\eqref{eqn:conic.yu} allows to obtain the fifth--order derivative $u_{yyyyy}$ in terms of those of lower order, in view of all we have obtained so far, it turns out that all fifth--order derivatives can be expressed in terms of the lower--order ones, forming a system of six equations. Such system is not overdetermined because   a direct computation shows that it gives no other compatibility conditions.



%

Since the system  \eqref{eqn:sys.to.be.studied} is  symmetric with respect to the interchanging of $x$ and $y$, the consequences \eqref{eqn:cond.4.ord} and \eqref{eqn:conic.yu} have to still hold true after switching $x$ with $y$, i.e., we get the system:
\begin{equation}\label{eqn:4.and.5.cond}
\left\{
\begin{array}{l}
    u_{xy}u_{yy}u_{yyyy}-2u_{xy}u_{yyy}^2+2u_{xyy}u_{yy}u_{yyy}-u_{xyyy}u_{yy}^2=0\,,
    \\
    \\
    u_{xy}u_{xx}u_{xxxx}-2u_{xy}u_{xxx}^2+2u_{xxy}u_{xx}u_{xxx}-u_{xxxy}u_{xx}^2=0\,,
    \\
    \\
    9u_{yy}^2u_{yyyyy}-45u_{yy}u_{yyy}u_{yyyy}+40u_{yyy}^3=0\,,
    \\
    \\
    9u_{xx}^2u_{xxxxx}-45u_{xx}u_{xxx}u_{xxxx}+40u_{xxx}^3=0\,.
\end{array}
\right.
\end{equation}
%
Proposition~\ref{propProposizioncina} says precisely that the solutions to the sub--system of \eqref{eqn:4.and.5.cond} made of the last two equations are those conics in the $(y,u)$--plane (with  coefficients depending upon $x$) that are simultaneously conics  in the $(x,u)$--plane (with  coefficients depending upon $y$): in other words, $u=u(x,y)$ is a solution to the aforementioned sub--system of \eqref{eqn:4.and.5.cond} if and only if
\begin{equation}\label{eqn:u.conic}
au^2 + (h_0+h_1x+h_2y+h_3xy)u + k_0+k_1x+k_2y+k_3x^2+k_4xy+k_5y^2+k_6x^2y+k_7xy^2+k_8x^2y^2=0\,,\,\,a,h_i,k_i\in\mathbb{R}\,.
\end{equation}
%
%
%
%


%


\subsection{A tensorial (re)formulation of the compatibility conditions}
In this section we show that the compatibility conditions can be given a tensorial interpretation, much as we did in Section~\ref{secTensDer} above for the system~\eqref{eqn:system.conv.form} itself: this can be achieved by
considering the prolongation to higher--order jet spaces of a system of PDEs, equipped with its compatibility conditions, and the higher--order Hessian.
To begin with, we interpret the
system \eqref{eqn:system.conv.form} as a 10--dimensional submanifold of $J^3$: local coordinates on $\mathcal{S}$ are
$$
x,y,u,u_x,u_y,u_{xx},u_{xy},u_{yy},u_{xyy},u_{yyy}\,.
$$
The  fourth--order compatibility conditions  \eqref{eqn:cond.4.ord}, together with the original system \eqref{eqn:system.conv.form} and the outcomes of its total differentiation, describes an 11--dimensional submanifold of $J^4$, which we denote by $\E_+^{(I)}$: indeed, similarly as we have  showed above, the eleven functions
\begin{equation}\label{eqn:coord.on.S1}
x,y,u,u_x,u_y,u_{xx},u_{xy},u_{yy},u_{xyy},u_{yyy},u_{yyyy}
\end{equation}
can be taken as  local coordinates on $\E_+^{(I)}$, since $u_{xxx}$, $u_{xxy}$, $u_{xxxx}$, $u_{xxxy}$, $u_{xxyy}$ and $u_{xyyy}$ can be expressed in terms of \eqref{eqn:coord.on.S1}.
This very reasoning can be repeated at the fifth order: we consider the system made of the equation \eqref{eqn:conic.yu}, the system describing the submanifold $\E_+^{(I)}$, together with the outcomes of its total differentiation, and interpret  this new system as a submanifold of $J^5$, which will be accordingly denoted  by $\E_+^{(II)}$. Also the submanifold $\E_+^{(II)}$ turns out to be 11--dimensional since, as we have seen above, all fifth--order derivatives can be expressed in terms of lower--order ones and then  we can take again   \eqref{eqn:coord.on.S1} as  local coordinates on $\E_+^{(II)}$.
The following result, that can be obtained by direct computations, generalizes the fact that system \eqref{eqn:system.conv.form} can be obtained by requiring that
$u_{xxx}dx^3+3u_{xxy}dx^2dy+3u_{xyy}dxdy^2+u_{yyy}dy^3$
be proportional to
$u_{xx}dx^2+2u_{xy}dxdy+u_{yy}dy^2$, see \eqref{eqn:H3.prop.H2.H1.1}.
\begin{proposition}
The following tensorial relations
\begin{eqnarray*}
u_{xxx}dx^3+3u_{xxy}dx^2dy+3u_{xyy}dxdy^2+u_{yyy}dy^3 &\propto& u_{xx}dx^2+2u_{xy}dxdy+u_{yy}dy^2\,,\\
u_{xxxx}dx^4+4u_{xxxy}dx^3dy+6u_{xxyy}dx^2dy^2+u_{xyyy}dxdy^3+u_{yyyy}dy^4 &\propto& u_{xx}dx^2+2u_{xy}dxdy+u_{yy}dy^2\,,\\
u_{xxxxx}dx^5+5u_{xxxxy}dx^4dy+10u_{xxxyy}dx^3dy^2+\dots+u_{yyyyy}dy^5 &\propto& u_{xx}dx^2+2u_{xy}dxdy+u_{yy}dy^2\,,
\end{eqnarray*}
hold true, once restricted, respectively, to $\E_+$, $\E_+^{(I)}$ and $\E_+^{(II)}$.
\end{proposition}

%

\section{The geometry of 
characteristics lines in the case  of $\Aff(3)$--invariant scalar PDEs}\label{sec:det.hess.minore.zero}

A particularly simple and geometrically well--behaved class of scalar PDEs in two independent variables is made of those whose characteristic conic  distribution degenerates to a vectorial one, see~\cite{MR2985508,MR2503974,MR1194520,MR1504329,MannoMoreno2016}: such PDEs are called also of \emph{Goursat type}. 
%
%
%
%
%
In this section we will show  that the $3\Rd$--order scalar PDEs given by  \eqref{eqn:union.PDEs} are precisely of this kind: in particular, we shall see that their  \emph{symbol} has rank one and that they  are completely determined by a $3$--dimensional vector sub--distribution of the $2\Nd$--order contact distribution $\CC^2$ on $J^2$ (cf.~\eqref{eqn:higher.contact}), called the \emph{characteristic distribution}.

For the sake of simplicity, here we will deal only with one of the two factors of~\eqref{eqn:union.PDEs}, for instance, with
\begin{multline}\label{eqn:union.PDEs.1}
\E^1_-:\,\,\sqrt{-\det(\Hess(u))} \big(u_{xx}u_{yy}u_{yyy}-3u_{xxy}u_{yy}^2-4u_{xy}^2u_{yyy}+6u_{xy}u_{xyy}u_{yy} \big)
\\
+ \big(-3u_{xx}u_{xy}u_{yy}u_{yyy}+3u_{xx}u_{xyy}u_{yy}^2-u_{xxx}u_{yy}^3
+3u_{xxy}u_{xy}u_{yy}^2+4u_{xy}^3u_{yyy}-6u_{xy}^2u_{xyy}u_{yy}\big)=0\, ,
\end{multline}
since  the whole machinery applies  as well to the other factor; we stress also that we work all the time over $J^2_-$. 
The ($3$--dimensional) characteristic distribution $\mathcal{V}$ of \eqref{eqn:union.PDEs.1} will be studied below.

\subsection{Characteristic lines  of a $3\Nd$--order scalar PDE}
The construction of characteristic lines of 
a $3\Rd$ order Monge--Amp\`ere equation in two independent variables of Goursat type is  explained in~\cite[Section~5]{MannoMoreno2016}, and it goes as follows.\par
A scalar
$3\Rd$ order PDE in one unknown function $u=u(x^1,x^2)=u(x,y)$ and $2$ independent variables $(x^1,x^2)=(x,y)$ is locally described by 
\begin{equation}\label{eqn:PDE.3.order.scalar}
\E=\{F(x^i,u,u_i, u_{ij}, u_{ijk})=0\}\,.
\end{equation}
We can assume that \eqref{eqn:PDE.3.order.scalar} be a hypersurface of $J^3$. 
%
The departing point of our construction is the symmetric $3$--form
\begin{equation}\label{eq:symbol.3}
\sum_{i\leq j\leq k}\frac{\partial F}{\partial u_{ijk}}\eta_i\eta_j\eta_k\, ,
\end{equation}
that can be associated with the function $F$,
called  the \emph{symbol} of $F$ (see, e.g.,~\cite{GMMS_ADE,FG,Petrovsky1992}), and 
the symmetric $3$--tensor
\begin{equation}\label{eq:symbol.3.as.tensor}
\sum_{i\leq j\leq k}\frac{\partial F}{\partial u_{ijk}}D^{(3)}_{x^i}D^{(3)}_{x^j}D^{(3)}_{x^k}
\end{equation}
belonging to $\TT^{3}\odot \TT^{3}\odot \TT^{3}$, where $\TT^3$ is the tautological bundle over $J^3$ (see Definition \ref{defTautBundle}) and $D^{(3)}_{x^i}$ denotes the total derivative operator with respect to $x^i$, truncated to the third order.


\par
The symmetric $3$--form \eqref{eq:symbol.3} (as well as \eqref{eq:symbol.3.as.tensor}), once restricted to the PDE $\E$ given by \eqref{eqn:PDE.3.order.scalar}, gives the so--called \emph{symbol} of $\E$.
Even if the symbol of $\E$ is defined up to a non--vanishing factor, we will be using only its properties that  do not depend on this factor.

\par
Let us assume that
the symbol of $\E$  be a perfect cube, i.e.,  the cubic power of a linear factor: then there must exist functions $f^i$   on $\E$, such that
\begin{equation}\label{eq:symbol.3bis}
\left.\left(\sum_{i\leq j\leq k}\frac{\partial F}{\partial u_{ijk}}\eta_i\eta_j\eta_k\right)\right|_{\E} = (f^i\eta_i)^3\, .
\end{equation}
%
Since the symbol  of $\E$ satisfies  \eqref{eq:symbol.3bis}, it  is called a \emph{rank--one} symbol: from a tensorial viewpoint, the restriction of \eqref{eq:symbol.3.as.tensor} to $\E$ breaks into the $3{\Rd}$ power of a single $1$--tensor:
\begin{equation*}
\left.\left(\sum_{i\leq j\leq k}\frac{\partial F}{\partial u_{ijk}}D^{(3)}_{x^i}D^{(3)}_{x^j}D^{(3)}_{x^k}
\right)\right|_{\E}=\left(\left.f^iD^{(3)}_{x^i}\right|_{\E}\right)^3\,.
\end{equation*}
This allows   to associate, with each point $a^3\in\E_{a^2}=\E\cap J^3_{a^2}$ (cf. \eqref{eqn:fibre.of.E}),  the tangent line at $a^2$
\begin{equation}\label{eq:char.line}
    l_{a^3}=\Span{f^i(a^3)D^{(3)}_{x^i}\big|_{a^3}}\,,
\end{equation}
called a \emph{characteristic line of $\E$ at $a^3$}.

%
%
%

%
\subsection{The $3$--dimensional distribution associated with the $\Aff(3)$--invariant PDE with $\det(\Hess(u))<0$}

In this section, to keep the notation light, we will denote a point of $J^k_-$ simply by $a^k$.\par

Since the symbol of the PDE \eqref{eqn:union.PDEs.1} turns out to be of rank one, being  proportional to
\begin{equation*}
    \left(\sqrt{-\det(\Hess(u))}\,u_{yy}\,\,\eta_1-\left(\sqrt{-\det(\Hess(u))}\,u_{xy}+u_{xx}u_{yy}-u_{xy}^2\right)\,\eta_2\right)^3\, ,
\end{equation*}
it makes sense to consider the line~\eqref{eq:char.line}   for each point $a^3=(x,y,u,u_i,u_{ij},u_{ijk})$ that satisfies  \eqref{eqn:union.PDEs.1}:
\begin{equation}\label{eqn:char.line}
    l_{a^3}=\Span{\sqrt{-\det(\Hess(u))}\,u_{yy}\,\,D^{(3)}_x-\left(\sqrt{-\det(\Hess(u))}\,u_{xy}+u_{xx}u_{yy}-u_{xy}^2\right)\,D^{(3)}_y }\, ,
\end{equation}
where
\begin{eqnarray*}
&D^{(3)}_x=\partial_x+u_x\partial_u+u_{xx}\partial_{u_x}+u_{xy}\partial_{u_y}+u_{xxx}\partial_{u_{xx}}
+u_{xxy}\partial_{u_{xy}} +u_{xyy}\partial_{u_{yy}}\, , \\
&D^{(3)}_y=\partial_y+u_y\partial_u+u_{xy}\partial_{u_x}+u_{yy}\partial_{u_y}+u_{xxy}\partial_{u_{xx}}
+u_{xyy}\partial_{u_{xy}} +u_{yyy}\partial_{u_{yy}} \, ,
\end{eqnarray*}
and  $u_{xxx}$ is obtained from \eqref{eqn:union.PDEs.1}. \par
Let us recall that $\CC^2$ denotes the $2\Nd$ order contact distribution on $J^2$, cf. \eqref{eqn:higher.contact}. 
We are then  in position of defining a conic
subset $\mathcal{V}_{a^2}$ of $\CC^2_{a^2}$ at $a^2=(x,y,u,u_i,u_{ij})\in J^2_-$:   for each point $a^3=(x,y,u,u_i,u_{ij},u_{ijk})$ of the fiber $(\E^1_-)_{a^2}$ of the PDE \eqref{eqn:union.PDEs.1} over  $a^2=(x,y,u,u_i,u_{ij})$, we consider the line $l_{a^3}$ given by \eqref{eqn:char.line} and we let
\begin{equation}\label{eq:3d.dist}
    \mathcal{V}_{a^2}:=\bigcup_{a^3\in (\E_-^1)_{a^2}}  l_{a^3}\,,
\end{equation}
i.e., as the point $a^3$ describes $(\E_-^1)_{a^2}$, the corresponding line $l_{a^3}$ sweeps the conic subset
$\mathcal{V}_{a^2}$.
\begin{proposition}\label{prop:distr.3ordMA}
The distribution of conic subsets of $\CC^{2}$
\begin{equation}\label{eqn.conic.vector.distr}
J^2_-\ni a^2\longmapsto \mathcal{V}_{a^2}\subset\CC^2_{a^2}
\end{equation}
defined by ~\eqref{eq:3d.dist} 
turns out to be a vector distribution locally given by
\begin{equation}\label{eq:forma.esplicita.dist}
    \mathcal{V}=\langle -u_{yy}D^{(2)}_x+bD^{(2)}_y\,,\,2b\partial_{u_{xx}}+u_{yy}\partial_{u_{xy}} \,,\, -b^2\partial_{u_{xx}} +u_{yy}^2\partial_{u_{yy}}\rangle\,,
\end{equation}
where
$$
b=u_{xy}-\sqrt{-\det(\Hess(u))}\,.
$$
\end{proposition}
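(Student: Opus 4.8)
The plan is to make the characteristic line $l_{a^{3}}$ fully explicit as a vector of $\CC^{2}_{a^{2}}$, to write it as a fixed vector plus a ``vertical'' correction ranging over a $2$--plane, and to recognise the resulting $3$--plane as the right--hand side of \eqref{eq:forma.esplicita.dist}. Throughout set $\varrho:=\sqrt{-\det(\Hess(u))}$, so that $b=u_{xy}-\varrho$, $u_{xx}u_{yy}-u_{xy}^{2}=-\varrho^{2}$ and $\varrho u_{xy}+u_{xx}u_{yy}-u_{xy}^{2}=\varrho b$. I argue locally on the open subset of $J^{2}_{-}$ where $u_{yy}\neq 0$, the locus $\{u_{xx}\neq 0\}$ being covered symmetrically by exchanging $x$ and $y$ --- this is what ``locally'' means in the statement.

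First I would rewrite \eqref{eqn:union.PDEs.1}. Being homogeneous of degree one in the third derivatives, the function $F$ defining $\E^{1}_{-}$ equals $\sum_{|\sigma|=3}(\partial F/\partial u_{\sigma})u_{\sigma}$ with coefficients in $x,\dots,u_{yy}$ only; and the coefficients $\partial F/\partial u_{\sigma}$, $|\sigma|=3$, are, up to one common nonzero factor, the coefficients of the symbol, which by hypothesis equals $\varrho^{3}\big(u_{yy}\eta_{1}-b\,\eta_{2}\big)^{3}=\varrho^{3}\big(u_{yy}^{3}\eta_{1}^{3}-3u_{yy}^{2}b\,\eta_{1}^{2}\eta_{2}+3u_{yy}b^{2}\eta_{1}\eta_{2}^{2}-b^{3}\eta_{2}^{3}\big)$. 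Hence $(\E^{1}_{-})_{a^{2}}\subset J^{3}_{a^{2}}$ is the linear hyperplane
\begin{equation}\label{eq:PDE1m.compact}
u_{yy}^{3}u_{xxx}-3u_{yy}^{2}b\,u_{xxy}+3u_{yy}b^{2}u_{xyy}-b^{3}u_{yyy}=0\,.
\end{equation}
Next, via the fibrewise inclusion $\TT^{3}\hookrightarrow\pi_{3,2}^{*}(TJ^{2})$ of Remark~\ref{rem.JL.inclusion}, which sends $D^{(3)}_{x}|_{a^{3}}$ to $D^{(2)}_{x}+u_{xxx}\partial_{u_{xx}}+u_{xxy}\partial_{u_{xy}}+u_{xyy}\partial_{u_{yy}}$ and $D^{(3)}_{y}|_{a^{3}}$ to $D^{(2)}_{y}+u_{xxy}\partial_{u_{xx}}+u_{xyy}\partial_{u_{xy}}+u_{yyy}\partial_{u_{yy}}$, I would divide the generator of \eqref{eqn:char.line} by $\varrho$ and obtain $l_{a^{3}}=\Span{v_{a^{3}}}$ with
\[
v_{a^{3}}=\big(u_{yy}D^{(2)}_{x}-b\,D^{(2)}_{y}\big)+\alpha\,\partial_{u_{xx}}+\beta\,\partial_{u_{xy}}+\gamma\,\partial_{u_{yy}}\,,
\]
where $\alpha=u_{yy}u_{xxx}-b\,u_{xxy}$, $\beta=u_{yy}u_{xxy}-b\,u_{xyy}$, $\gamma=u_{yy}u_{xyy}-b\,u_{yyy}$; in particular $l_{a^{3}}\subset\CC^{2}_{a^{2}}$.

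The crux is then a one--line identity: using \eqref{eq:PDE1m.compact} one finds $u_{yy}^{2}\alpha-2u_{yy}b\,\beta+b^{2}\gamma=u_{yy}^{3}u_{xxx}-3u_{yy}^{2}b\,u_{xxy}+3u_{yy}b^{2}u_{xyy}-b^{3}u_{yyy}=0$, so the vertical part of $v_{a^{3}}$ lies in the $2$--plane $V:=\Span{2b\,\partial_{u_{xx}}+u_{yy}\partial_{u_{xy}}\,,\,-b^{2}\partial_{u_{xx}}+u_{yy}^{2}\partial_{u_{yy}}}$ (both listed generators satisfy $u_{yy}^{2}\alpha-2u_{yy}b\beta+b^{2}\gamma=0$ and are independent since $u_{yy}\neq0$). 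As $u_{yy}D^{(2)}_{x}-b\,D^{(2)}_{y}$ is minus the first generator of \eqref{eq:forma.esplicita.dist} and is independent of $a^{3}$, this gives $l_{a^{3}}\subset\Span{u_{yy}D^{(2)}_{x}-b\,D^{(2)}_{y}}\oplus V$, which is the fibre over $a^{2}$ of \eqref{eq:forma.esplicita.dist}; hence $\mathcal{V}_{a^{2}}$ is contained in it. For the reverse inclusion I would observe that $(u_{xxx},u_{xxy},u_{xyy},u_{yyy})\mapsto(\alpha,\beta,\gamma)$ is a linear map of rank $3$ with kernel spanned by $(b^{3},b^{2}u_{yy},bu_{yy}^{2},u_{yy}^{3})$, which satisfies \eqref{eq:PDE1m.compact}; so its restriction to the hyperplane \eqref{eq:PDE1m.compact} still has rank $2$, and by the previous step its image is all of $V$. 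Thus, as $a^{3}$ runs over $(\E^{1}_{-})_{a^{2}}$, the vertical part of $v_{a^{3}}$ sweeps all of $V$, and $\mathcal{V}_{a^{2}}=\bigcup_{z\in V}\R\big(u_{yy}D^{(2)}_{x}-b\,D^{(2)}_{y}+z\big)$ spans --- and has closure equal to --- the $3$--plane \eqref{eq:forma.esplicita.dist}. Since $\varrho$ (hence $b$) is smooth on $J^{2}_{-}$ and the three fields in \eqref{eq:forma.esplicita.dist} are pointwise independent where $u_{yy}\neq0$, this identifies \eqref{eqn.conic.vector.distr} locally with the rank--$3$ vector distribution \eqref{eq:forma.esplicita.dist}.

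I expect two mildly delicate points. One is the algebraic reduction of \eqref{eqn:union.PDEs.1} to \eqref{eq:PDE1m.compact}: it is cleanest to read the coefficients off the rank--one symbol rather than to expand \eqref{eqn:union.PDEs.1} directly, repeatedly using $u_{xx}u_{yy}-u_{xy}^{2}=-\varrho^{2}$ and the recognition of a binomial cube. The other is that, the fibre $(\E^{1}_{-})_{a^{2}}$ being non-compact, the bare union $\mathcal{V}_{a^{2}}$ of the characteristic lines is only dense in the $3$--plane \eqref{eq:forma.esplicita.dist} (it misses the punctured $2$--plane $V\setminus\{0\}$), so ``vector distribution'' in the statement must be read as the closure --- equivalently, the pointwise linear span --- of the cones $\mathcal{V}_{a^{2}}$. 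Everything else is routine linear algebra once $v_{a^{3}}$ is in the above normal form.
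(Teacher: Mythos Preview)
Your proof is correct. The paper's own proof is a one--liner: it asserts that ``a direct (and tedious) computation'' yields two linear equations \eqref{eqn:char.var} in the fibre coordinates $y^{1},y^{2},p_{11},p_{12},p_{22}$ of $\CC^{2}_{a^{2}}$, and then states that their solution space is \eqref{eq:forma.esplicita.dist}. Your argument recovers exactly those two equations, but constructively: after the reduction to the compact form \eqref{eq:PDE1m.compact} you split the characteristic--line generator $v_{a^{3}}$ into its fixed horizontal part $u_{yy}D^{(2)}_{x}-bD^{(2)}_{y}$ and a vertical part $(\alpha,\beta,\gamma)$, and your identity $u_{yy}^{2}\alpha-2u_{yy}b\,\beta+b^{2}\gamma=0$ is precisely the paper's second equation in \eqref{eqn:char.var} (once one simplifies the coefficients using $u_{xx}u_{yy}-u_{xy}^{2}=-\varrho^{2}$), while the constraint on the horizontal part is the paper's first equation. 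What your route buys is transparency: one sees directly \emph{why} the distribution is $3$--dimensional (one fixed horizontal direction plus a $2$--plane of vertical corrections) and obtains the basis \eqref{eq:forma.esplicita.dist} without having to solve a linear system after the fact. The paper's route is shorter to state but opaque.

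Your closing remark about the bare union $\mathcal{V}_{a^{2}}$ missing $V\setminus\{0\}$ is accurate and applies equally to the paper's formulation; both accounts implicitly identify $\mathcal{V}_{a^{2}}$ with its linear span.
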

\begin{proof}
A direct (and tedious) computation shows that $\mathcal{V}_{a^2}$ is locally described by
\begin{equation}\label{eqn:char.var}
\mathcal{V}_{a^2}:\left\{
\begin{array}{l}
y^1u_{xy}\sqrt{-\det(\Hess(u))}+y^2u_{yy}\sqrt{-\det(\Hess(u))}+y^1u_{xx}u_{yy}-y^1u_{xy}^2=0\,,
\\
\\
2p_{12}u_{yy}\sqrt{-\det(\Hess(u))}-2\sqrt{-\det(\Hess(u))}p_{22}u_{xy}+p_{11}u_{yy}^2-2p_{12}u_{xy}u_{yy}-p_{22}u_{xx}u_{yy}+2p_{22}u_{xy}^2=0\,,
\end{array}
\right.
\end{equation}
where $y^1,y^2,p_{11},p_{12},p_{22}$ are local coordinates on $\CC^2_{a^2}$ with respect to the basis $D^{(2)}_x|_{a^2},D^{(2)}_y|_{a^2},\partial_{u_{11}}|_{a^2},\partial_{u_{12}}|_{a^2},\partial_{u_{22}}|_{a^2}$ of $\CC^2_{a^2}$. 
In other words, for each point $a^2=(x,y,u,u_i,u_{ij})\in J^2_-$, the subset $\mathcal{V}_{a^2}$ of $\CC^2_{a^2}$ is described by the  system~\eqref{eqn:char.var} of two (independent) linear equations in $y^1,y^2,p_{11},p_{12},p_{22}$: this means that the correspondence \eqref{eqn.conic.vector.distr}
defines a $3$--dimensional linear distribution on $J^2_-$ inscribed in the $2\Nd$ order contact distribution $\CC^2$. The space of solutions of the system~\eqref{eqn:char.var}  is precisely ~\eqref{eq:forma.esplicita.dist}.
\end{proof}
%
%
%
%
%
For the sake of paper's self--consistency, we explain how PDE~\eqref{eqn:union.PDEs.1} can be actually recovered out of $\mathcal{V}$.
%
%
Recalling that the PDE \eqref{eqn:union.PDEs.1} can be interpreted as a hypersurface of $J^3$, 
one can use $\mathcal{V}$  to define   the subset
\begin{equation}\label{eqn:Goursat.considerata}
\E_{\mathcal{V}} := \left\{a^3\in J^3_-\,\,|\,\, \TT^3_{a^3}\cap \mathcal{V}_{a^2}\neq 0\,,\,\,a^2\in J^2_-\,,\,\,\pi_{3,2}(a^3)=a^2\right\}
\end{equation}
of $J^3_-$, where $\TT^3$ is the tautological bundle on $J^3$, see Definition \ref{defTautBundle}. Since $\TT^3_{a^3}$ and $\mathcal{V}_{a^2}$ are subspaces of dimension $2$ and $3$, respectively, of the $5$--dimensional vector space $\CC^2_{a^2}$,  their intersection is generically $0$--dimensional: the condition that such intersection be non--trivial reads
\begin{equation*}
\det
\begin{pmatrix}
1 & 0 & u_{xxx} & u_{xxy} & u_{xyy}
\\
0 & 1 & u_{xxy} & u_{xyy} & u_{yyy}
\\
-u_{yy} & b & 0 & 0 & 0
\\
0 & 0 & 2b & u_{yy} & 0
\\
0 & 0 & -b^2 & 0 & u_{yy}^2
\end{pmatrix}
=0\,,
\end{equation*}
which gives exactly the equation \eqref{eqn:union.PDEs.1}. In other words, we have proved that $\E^1_-=\E_{\mathcal{V}}$, i.e., $\E^1_-$ can be recovered out of $\mathcal{V}$.
\par
Another approach is based on the $\Aff(3)$--invariance of the equation~\eqref{eqn:union.PDEs.1}. If we fix the point
\begin{equation}\label{eqn:a2.000}
a^2=(0,0,0,0,0,-1,0,1)\in J^2_-\,,
\end{equation}
then the equation~\eqref{eqn:union.PDEs.1} becomes (cf. \eqref{eqEffeUnoEffeDue} and \eqref{eqDaRichiamareAllaFine1})
\begin{equation}\label{appoggio1}
u_{xxx}+3u_{xxy}+3u_{xyy}+u_{yyy}=0\, ,
\end{equation}
whose symbol is simply
$
(\eta_1+\eta_2)^3
$, whence the
characteristic line is
$
D^{(3)}_x+D^{(3)}_y
$,
with $u_{xxx}=-(3u_{xxy}+3u_{xyy}+u_{yyy})$ in the expression of  $D^{(3)}_x$, cf. \eqref{eqEffeUnoEffeDue} and \eqref{eqDaRichiamareAllaFine1}. This immediately leads to the $3$--dimensional linear subspace
\begin{equation}\label{eq:dist.in.un.punto}
   V:=\Span{D^{(2)}_x\big|_{a^2}+D^{(2)}_y\big|_{a^2}\,,\,-2\partial_{u_{xx}}\big|_{a^2}+\partial_{u_{xy}}\big|_{a^2}\,,\,-\partial_{u_{xx}}\big|_{a^2}+\partial_{u_{yy}}\big|_{a^2}  }
\end{equation}
of $\CC^2_{a^2}$, and  it is easy to see  that~\eqref{eq:dist.in.un.punto} gives precisely  the evaluation at the point $a^2$ of
\eqref{eq:forma.esplicita.dist}.\par

\smallskip
If we now compute the fiber
\begin{equation*}
    (\E_{\mathcal{V}})_{a^2}=\{ a^3\in J^3_-\mid \TT^3_{a^3}\cap V\neq 0\}
\end{equation*}
of the equation  \eqref{eqn:Goursat.considerata} at the point \eqref{eqn:a2.000},
where $V$ is given by~\eqref{eq:dist.in.un.punto},
we find out that such a fiber is given by~\eqref{appoggio1}. In other words, the equation~\eqref{eqn:union.PDEs.1} and the
Goursat--type Monge--Amp\`ere equation~\eqref{eqn:Goursat.considerata}, that are both $\Aff(3)$--invariant, have the same fiber~\eqref{appoggio1}  over the point $a^2\in J^2_-$: this means that they must coincide.


\subsection{The symbol of~\eqref{eqFormulaSospirata}  as a feature of  the Blaschke metric}

Now we go back to the whole equation $\E$, that is the zero locus in $J^3$ of the function $F$ given by \eqref{eqFormulaSospirata}: by direct computations, that we will omit, it is possible to establish  equation~\eqref{eqn:Pick.and.Sym} below, that  allows to obtain the symbol of $F$ directly from the Blaschke metric and the Fubini--Pick cubic form (see Section \ref{subAffCaseR3}).

\begin{proposition}
Let $h=h_{ij}$ be the Blaschke metric and $C=C_{abc}$ the Fubini--Pick cubic form: then\footnote{The summation at the left--hand side of \eqref{eqn:Pick.and.Sym} is over all indices.}
\begin{equation}\label{eqn:Pick.and.Sym}
8(|\det(\Hess(u))|)^{\frac52}h^{ai}h^{bj}h^{ck}C_{abc}\eta_i\eta_j\eta_k =\mathrm{sgn}(\det(\Hess(u)))\sum_{i\leq j\leq k}\frac{\partial F}{\partial u_{ijk}}\eta_i\eta_j\eta_k\,,
\end{equation}
where $F$ is given by \eqref{eqFormulaSospirata}.
\end{proposition}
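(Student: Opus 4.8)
The statement is an explicit identity between two symmetric 3-forms in the auxiliary variables $\eta_1,\eta_2$: on the right the symbol of $F$, i.e.\ $\sum_{i\le j\le k}\partial F/\partial u_{ijk}\,\eta_i\eta_j\eta_k$ with $F$ given by \eqref{eqFormulaSospirata}; on the left the full contraction of the Fubini--Pick cubic form with three copies of the inverse Blaschke metric, weighted by $8|\det\Hess(u)|^{5/2}$ and twisted by $\mathrm{sgn}(\det\Hess(u))$. Both sides are polynomial functions on $\check{J}^3$, so the cleanest route is simply to compute both, using the coordinate formulas already assembled in Section~\ref{sec.local.expr}, and check that they agree coefficient by coefficient in $\eta_1,\eta_2$ (three monomials $\eta_1^3,\eta_1^2\eta_2,\eta_1\eta_2^2,\eta_2^3$ — four coefficients). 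This is exactly the kind of ``direct computation, that we will omit'' the proposition announces, so the write-up can be a guided sketch rather than a grind.

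Concretely, first I would fix $n=2$ and write the ingredients of the left-hand side in terms of $f_{ij}$ and $f_{ijk}$: the Blaschke metric is $h^{\A}_{ij}=\rho f_{ij}$ with $\rho=[\det\Hess(f)]^{-1/4}$ by \eqref{eqn:Blaschke.metric} (here $n+2=4$), so its inverse is $h^{\A\,ij}=\rho^{-1}f^{ij}$ where $f^{ij}$ is the inverse Hessian; and the Fubini--Pick components are $C^{\A}_{abc}=\rho f_{abc}+f_{ab}\,\partial_c\rho+f_{bc}\,\partial_a\rho+f_{ac}\,\partial_b\rho$ by \eqref{eqn:C.xi.coordinates}. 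Contracting, $h^{\A\,ai}h^{\A\,bj}h^{\A\,ck}C^{\A}_{abc}=\rho^{-3}f^{ai}f^{bj}f^{ck}C^{\A}_{abc}$; the three ``$\partial\rho$'' terms in $C^{\A}_{abc}$, when hit by two inverse-Hessian contractions $f^{ai}f^{bj}$, produce $\delta$'s and reduce to lower-rank pieces. Because $\partial_c\rho=-\tfrac14\rho\,(\det\Hess)^{-1}\,\partial_c(\det\Hess)=-\tfrac14\rho\,f^{pq}f_{pqc}$ (using Jacobi's formula), every term can be expressed purely through $f_{ij}$, $f^{ij}$ and $f_{ijk}$. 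Multiplying through by the Hessian determinant to clear denominators, the factor $8|\det\Hess|^{5/2}$ is precisely what is needed to turn the $\rho$-powers and the $f^{ij}=(\det\Hess)^{-1}\mathrm{adj}(\Hess)_{ij}$ into a polynomial in $f_{ij},f_{ijk}$; one then recognizes the resulting polynomial, after the $\mathrm{sgn}(\det\Hess)$ twist, as $\partial F/\partial u_{ijk}$ read off from \eqref{eqFormulaSospirata}.

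The differentiations $\partial F/\partial u_{ijk}$ are straightforward: from \eqref{eqFormulaSospirata} one extracts $\partial F/\partial u_{xxx}$, $\partial F/\partial u_{xxy}$, $\partial F/\partial u_{xyy}$, $\partial F/\partial u_{yyy}$ as explicit polynomials in $u_{xx},u_{xy},u_{yy}$ (quadratic) and in the remaining third derivatives (linear), and these are the coefficients of $\eta_1^3$, $3\eta_1^2\eta_2$, $3\eta_1\eta_2^2$, $\eta_2^3$ respectively. A useful sanity check along the way: at the fiducial point $a^2_\pm$ (where $f_{ij}=\mathrm{diag}(1,\pm1)$, $\rho=|{\pm1}|^{-1/4}=1$, $\partial_c\rho=0$) the left side collapses to $8\,(\pm1)\cdot f^{ai}f^{bj}f^{ck}f_{abc}\eta_i\eta_j\eta_k = 8\,\mathrm{sgn}(\det)\,(\pm)^{3}(u_{ijk}$ combination$)$, which must match $\pm$ the symbol computed from \eqref{eqSplitting1}--\eqref{eqSplitting2}; verifying this special case confirms the overall constant $8$ and the sign. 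By $\Aff(3)$-equivariance of all the tensors involved — $h^{\A}$, $C^{\A}$ and the symbol transform naturally under $\GL(2)\ltimes\R^2$ acting on the fiber, exactly as in the proof of Proposition~\ref{PropPrimaProposizioneSpezzamento} and Section~\ref{subsecRieman} — agreement at the fiducial point propagates to all of $\check{J}^3$, which would give a structural proof bypassing the full expansion.

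**Main obstacle.** There is no conceptual difficulty; the only real work is the bookkeeping in the general (non-fiducial) contraction — keeping track of the three symmetrized $\partial\rho$-terms in $C^{\A}_{abc}$ after two inverse-Hessian contractions, and confirming that the powers of $\det\Hess$ coming from $\rho^{-3}$, from $f^{ij}=(\det\Hess)^{-1}\mathrm{adj}$, and from the prefactor $8|\det\Hess|^{5/2}$ combine to give an honest polynomial with no residual fractional power. I expect the cleanest presentation is to prove it at $a^2_\pm$ by the explicit formulas \eqref{eqSplitting1}--\eqref{eqSplitting2} together with the coordinate expression of $C^{\A}$ and $h^{\A}$ there, and then invoke $\Aff(3)$-equivariance (equivalently, $\GL(2)$-equivariance of the fiberwise objects, as used repeatedly in Sections~\ref{secCasAff}) to conclude in general; this reduces ``a direct and tedious computation'' to a short, checkable one.
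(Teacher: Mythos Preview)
Your overall plan---compute both sides in the coordinate expressions of Section~\ref{sec.local.expr} and compare coefficient by coefficient---is exactly what the paper does (it says ``by direct computations, that we will omit''), so on that level the approaches coincide.

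There is, however, a concrete error in your sanity check that would derail the short verification you propose. On the fibre $J^3_{a^2_\pm}$ the second--order data are fixed ($f_{ij}=\mathrm{diag}(1,\pm1)$, $\det\Hess=\pm1$, $\rho=1$), but $\partial_c\rho$ is \emph{not} zero there: by Jacobi's formula $\partial_c\rho=-\tfrac14\rho\,f^{pq}f_{pqc}$, which depends linearly on the third--order derivatives, and these are the free coordinates on the fibre. At $a^2_+$ one has $\partial_x\rho=-\tfrac14(u_{xxx}+u_{xyy})$, $\partial_y\rho=-\tfrac14(u_{xxy}+u_{yyy})$, and the three symmetrized $\partial\rho$--terms in $C^{\A}_{abc}$ are precisely what turn $f_{abc}$ into its trace--free part (e.g.\ $C^{\A}_{111}=\tfrac14(u_{xxx}-3u_{xyy})$), which is exactly what is needed to match the symbol of \eqref{eqSplitting1}. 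If you drop them, the left--hand side collapses to $8\sum f_{ijk}\eta_i\eta_j\eta_k$ at $a^2_+$, which does \emph{not} equal the symbol of $F$ there. (If instead you meant the single point $o^3=[Q_\pm]^3_0$, both sides vanish identically and the check is vacuous.) Once the $\partial_c\rho$--terms are restored, the fibrewise identity does hold and the constant $8$ is confirmed.

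Your equivariance shortcut (check on the fibre over $a^2_\pm$, then propagate by $\Aff(3)$) is an attractive alternative to the paper's brute expansion, but as written it is incomplete: the left--hand side carries the non--tensorial weight $|\det\Hess|^{5/2}$ and the right--hand side is the symbol of a \emph{specific} polynomial $F$, which is only a relative invariant under $\Aff(3)$. To make the argument rigorous you must verify that the two sides transform with the \emph{same} character under the stabiliser $H^{(1)}$ (equivalently, that the $\GL(2)\times\R^\times$--weights of $|\det\Hess|^{5/2}$, of $\rho^{-3}(\det\Hess)^{-3}$ from the three inverse Hessians, and of $F$ coincide). This bookkeeping is not hard, but it is not automatic, and without it agreement on one fibre does not propagate.
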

%
It should be stressed that the cubic form at the right--hand side of~\eqref{eqn:Pick.and.Sym} is the symbol of the \emph{function} $F$ and not of the equation $\E=\{F=0\}$ since, as we have seen above, for $\det(\Hess(u))>0$, the equation is actually a system of two equations. Nevertheless, for $\det(\Hess(u))<0$, we have that $F=f^1f^2$, where $f^i$ are the two functions given by the left-hand side of \eqref{eqn:union.PDEs} and it turns out that
$$
\sum_{i\leq j\leq k}\frac{\partial F}{\partial u_{ijk}}\eta_i\eta_j\eta_k =
\sum_{i\leq j\leq k}\frac{\partial (f^1f^2)}{\partial u_{ijk}}\eta_i\eta_j\eta_k
=\sum_{i\leq j\leq k}f^2\frac{\partial f^1}{\partial u_{ijk}}\eta_i\eta_j\eta_k + \sum_{i\leq j\leq k}f^1\frac{\partial f^2}{\partial u_{ijk}}\eta_i\eta_j\eta_k\,.
$$





\bibliographystyle{abbrvnat}
\bibliography{BibUniver}

\end{document}